\def\dOi{10(4:16)2014}
\subjclass[2012]{Theory of computation~Constructive mathematics; Mathematics of computing~Calculus}
\global\let\tikz@ensure@dollar@catcode=\relax
\newcommand*{\Theorem}{Theorem}
\newcommand*{\Proposition}{Proposition}
\newcommand*{\Lemma}{Lemma}
\newcommand*{\Corollary}{Corollary}
\newcommand*{\Definition}{Definition}
\newcommand*{\Question}{Question}
\newcommand*{\Remark}{Remark}
\newcommand*{\Notation}{Notation}
\newcommand*{\Figure}{Figure}
\theoremstyle{plain}
\newtheorem{theorem}{\Theorem}
\newtheorem{proposition}[theorem]{\Proposition}
\newtheorem{corollary}[theorem]{\Corollary}
\newtheorem{lemma}[theorem]{\Lemma}
\theoremstyle{definition}
\newtheorem{definition}[theorem]{\Definition}
\theoremstyle{remark}
\newtheorem{remark}[theorem]{\Remark}
\title[Bounded variation and the strength of Helly's selection theorem]{Bounded variation and the strength of \\ Helly's selection theorem}
\author[A.~P.~Kreuzer]{Alexander P.\ Kreuzer}
\address{Department of Mathematics \\
Faculty of Science \\
National University of Singapore \\
Block S17, 10 Lower Kent Ridge Road \\
Singapore 119076 
}
\email{matkaps@nus.edu.sg}
\urladdr{\url{http://www.math.nus.edu.sg/~matkaps/}}
\thanks{The author was partly supported by the RECRE project, and the Ministry of Education of Singapore through grant R146-000-184-112 (MOE2013-T2-1-062).}
\keywords{bounded variation, compactness, reverse mathematics, computational analysis, Weihrauch lattice}
\begin{document}

\begin{abstract}
  We analyze the strength of Helly's selection theorem (\lp{HST}), which is the most important compactness theorem on the space of functions of bounded variation ($BV$). For this we utilize a new representation of this space intermediate between $L_1$ and the Sobolev space $W^{1,1}$, compatible with the---so called---weak$^*$ topology on $BV$. We obtain that \lp{HST} is instance-wise equivalent to the Bolzano-Weierstra{\ss} principle over \ls{RCA_0}. With this \ls{HST} is equivalent to \ls{ACA_0} over \ls{RCA_0}. A similar classification is obtained in the Weihrauch lattice.
\end{abstract}

\maketitle

In this paper we investigate the space of functions of bounded variation ($BV$) and Helly's selection theorem (\lp{HST}) from the viewpoint of reverse mathematics and computable analysis. Helly's selection theorem is the most important compactness principle on $BV$. It is used in analysis and optimization, see for instance \cite{AFP00,BP12}.

This continues our work in \cite{aK11} and \cite{aK14a} where (instances of) the Bolzano-Weierstra{\ss} principle and the  Arzelà-Ascoli theorem were analyzed. There we showed, among others, that an instance of the Arzelà-Ascoli theorem is equivalent to a suitable single instance of the Bolzano-Weierstra{\ss} principle (for the unit interval $[0,1]$), which, in turn, is equivalent to an instance of \lp{WKL} for $\Sigma^0_1$-trees.
Here, we will show that an instance of Helly's selection theorem is equivalent to a single instance of the Bolzano-Weierstra{\ss} principle (and with this to an instance of the other principles mentioned above).
It is a priori not clear that this is possible since the proof of \lp{HST} uses seemingly iterated application of the Arzelà-Ascoli theorem and since there are compactness principles, which are instance-wise strictly stronger than Bolzano-Weierstra{\ss} for $[0,1]$. (For instance the Bolzano-Weierstra{\ss} principle for weak compactness on $\ell_2$ has this property, see \cite{aK12c}.)
A fortori this shows that \lp{HST} is equivalent to \ls{ACA_0} over \ls{RCA_0}.

We represent $BV$ as a weak derivative space in the style of Sobolev spaces. 
Our representation differs from all previous treatments in computable analysis or constructive mathematics known to the author.
Previously functions of bounded variation were regarded as actual functions, whereas we only regard them as $L_1$\nobreakdash-functions. 
With this, they can be characterized by the integral of absolute value of their weak derivative.
This has the advantage that it is closer to modern applications.
Moreover, this allows one to easily define functions of bounded variation not only on the real line but also on $\Real^n$, which is not possible with the classical definition of bounded variation.
We therefore believe that our representation has also other applications in computable analysis.

This paper is organized as follows. In \prettyref{sec:int} we define the space $BV$, in \prettyref{sec:comp} we compare $BV$ to other spaces and to other possible representations of functions of bounded variation, and in \prettyref{sec:hst} we analyze Helly's selection theorem.

\section{The space of functions of bounded variation}\label{sec:int}

A \emph{countable vector space} $A$ over a countable field $K$  consists of a set $\lvert A \rvert\subseteq \Nat$ and mappings $+ \colon \lvert A \rvert \times \lvert A \rvert \longto \lvert A \rvert$,  $\cdot \colon K \times \lvert A \rvert \longto \lvert A \rvert$, and a distinguished element $0\in \lvert A \rvert$, such that $A,+,\cdot,0$ satisfies the usual vector space axioms.

A (code for a) \emph{separable Banach space} $B$ consists of a countable vector space $A$ over $\Rat$ together with a function $\lVert \cdot \rVert \colon A \longto \Real$ satisfying $\lVert q \cdot a \rVert = \lvert q \rvert \cdot \lVert a \rVert$ and $\lVert a + b \rVert \le \lVert a \rVert + \lVert b \rVert$ for all $q\in \Rat$, $a,b\in A$. A point in $B$ is defined to be a sequence of elements $(a_k)_k$ in $A$ such that $\lVert a_k - a_{k+1}\rVert \le 2^{-k}$.
Addition and multiplication on $B$ are defined to be the continuous extensions of $+$, $\cdot$ from $A$ to $B$.

The space $L_1 := L_1([0,1])$ will be represented by the $\Rat$-vector space of rational polynomials $\Rat[x]$ together with the norm ${\lVert p \rVert}_{1} := \int_0^1 \lvert p(x) \rvert\, dx$. Since the rational polynomials are dense in the usual space $L_1$, this defines (a space isomorphic to) the usually used space (provably in suitable higher-order system where the textbook definition of $L_1$ can be formalized). See Example~II.10.4, Exercise~IV.2.15 and Chapter~X.1 in \cite{sS09}.

\subsection {Bounded variation}

The \emph{variation} of a function $f\colon [0,1]\longto \Real$ is defined to be 
\begin{equation}\label{eq:var}
V(f) := \sup_{0\le t_1 < \dots < t_n \le 1} \sum_{i=1}^{n-1} \abs{f(t_i) - f(t_{i+1})}
.\end{equation}
For an $L_1$-equivalence classes of functions $f\in L_1$ the variation is defined to be the infimum over all elements, i.e.,
\begin{equation}\label{eq:varl}
V_{L_1}(f) := \inf \left\{\, V(g) \sizeMid g\colon [0,1]\to \Real \text{ and $g=f$ almost everywhere} \,\right\}
.\end{equation}

The subspace of all $L_1$\nobreakdash-functions of bounded variation form a subspace of $L_1$ with the following norm
\begin{equation}\notag
{\lVert f \rVert}_{BV} := {\lVert f \rVert}_{1} + V_{L_1}(f)
.\end{equation}
However, it is not possible to code this space as a separable Banach space, as we did for $L_1$, since the variation $V$ is difficult to compute (see \prettyref{pro:bccduleq1eq} below) and since this space is not separable in this norm. (To see this take for instance the characteristic functions $\chi_{[0,u]}(x)$ of the intervals $[0,u]$. It is clear that these functions belong to $BV$. For $u,w\in [0,1]$ with $u \neq w$ the function $\chi_{[0,u]} - \chi_{[0,w]}$ contains a bump of height $1$, therefore ${\lVert \chi_{[0,u]} - \chi_{[0,w]}\rVert}_{BV} \ge 2$. Thus, these functions form a set of the size of the continuum which cannot be approximated by countably many functions.)

We will define the space $BV$ to be a subspace of $L_1$. 
\begin{definition}[$BV$, \ls{RCA_0}]\label{def:bv}
  The space $BV:=BV([0,1])$ is defined like the space $L_1([0,1])$ with the following exception.
  A point in $BV$ is a sequence $(p_k)_k\subseteq \Rat[x]$ together with a rational number $v\in \Rat$, such that
  \begin{itemize}
  \item ${\lVert p_k - p_{k+1} \rVert}_{1} \le 2^{-k}$, and
  \item $\int_0^1 \lvert p_k'(x) \rvert \, dx \le v$.
  \end{itemize}

  The vector space operations are defined pointwise for $p_k$ and $v$. (For scalar multiplication one chooses a suitable rational upper bound for the new $v$.) 
  
  The parameter $v$ will be called the \emph{bound on the variation of $f$}.
\end{definition}

This definition is justified by Propositions \ref{pro:jus1} and \ref{pro:jus2} below.
For later use we will collect the following lemma.
\begin{lemma}[\ls{RCA_0}]\label{lem:l1bv}
  Let $(f_n)_n\subseteq BV$ be a sequence converging in $L_1$ at a fixed rate to a function $f\in L_1$, i.e., ${\lVert f_n-f \rVert}_1 \le 2^{-n}$.
  If the bounds of variations $v_n$ for $f_n$ are uniformly bounded by a $v$, then $f\in BV$.
\end{lemma}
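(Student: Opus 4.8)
The plan is to construct, directly from the codes of the $f_n$, a code for $f$ as a point of $BV$ in the sense of \prettyref{def:bv}; we may assume $v\in\Rat$, replacing it by a rational upper bound otherwise. By \prettyref{def:bv} each $f_n$ is given by a sequence $(p_{n,k})_k\subseteq\Rat[x]$ together with a rational $v_n\le v$ such that ${\lVert p_{n,k}-p_{n,k+1}\rVert}_1\le 2^{-k}$ and $\int_0^1\lvert p_{n,k}'(x)\rvert\,dx\le v_n$, and the double sequence $(p_{n,k})_{n,k}$ together with the bounds $v_n$ is part of the given data. From the first inequality, ${\lVert p_{n,k}-f_n\rVert}_1\le\sum_{j\ge k}2^{-j}=2^{-k+1}$.

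First I would put $q_n:=p_{n,n+1}\in\Rat[x]$, so that $\int_0^1\lvert q_n'(x)\rvert\,dx\le v_n\le v$ and ${\lVert q_n-f_n\rVert}_1\le 2^{-n}$; combining the latter with the hypothesis ${\lVert f_n-f\rVert}_1\le 2^{-n}$ yields ${\lVert q_n-f\rVert}_1\le 2^{-n+1}$. To match the convergence rate required by \prettyref{def:bv}, I would then pass to the shifted sequence $p_k:=q_{k+2}$: since
\[
{\lVert p_k-p_{k+1}\rVert}_1\le{\lVert q_{k+2}-f\rVert}_1+{\lVert f-q_{k+3}\rVert}_1\le 2^{-k-1}+2^{-k-2}\le 2^{-k}
\]
and $\int_0^1\lvert p_k'(x)\rvert\,dx\le v$, the pair $\big((p_k)_k,v\big)$ is a legitimate code for a point of $BV$. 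Moreover ${\lVert p_k-f\rVert}_1\le 2^{-k-1}\to 0$, so this point is the $L_1$-equivalence class of $f$, and hence $f\in BV$.

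The construction involves no unbounded search: the maps $n\mapsto n+1$ and $k\mapsto k+2$ are explicit, the sequences $(p_{n,k})_{n,k}$ and $(v_n)_n$ come with the data, and all the norm estimates are elementary arithmetical facts about integrals of rational polynomials, so everything goes through in \ls{RCA_0}. Accordingly I do not expect a genuine obstacle here; the only point requiring a moment's care is precisely this \ls{RCA_0}-formalizability, and the write-up should be only a few lines.
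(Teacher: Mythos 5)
Your proof is correct and follows essentially the same route as the paper: both take a diagonal sequence of the rational polynomials coding the $f_n$ (the paper uses $p_{k+1,k+1}$, you use $p_{k+2,k+3}$ after a shift) and pair it with the uniform bound $v$ to obtain a code for $f$ in the sense of \prettyref{def:bv}. Your index bookkeeping is in fact slightly more careful than the paper's in verifying the consecutive-difference rate ${\lVert p_k-p_{k+1}\rVert}_1\le 2^{-k}$, but there is no substantive difference.
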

\begin{proof}
  Let $(p_{n,k})_k$ be the rational polynomials coding $f_n$. One has ${\lVert p_{k+1,k+1} - f\rVert}_1 \le {\lVert p_{k+1,k+1} - f_{k+1}\rVert}_1 + {\lVert f_{k+1} - f\rVert}_1 \le 2^{-k}$. Thus, $(p_{k+1,k+1})_k, v$ is a code for $f$ in the sense of \prettyref{def:bv}.
\end{proof}

For working with functions of $BV$ it will be handy to use mollifiers as defined below, since one can use them to smoothly approximate characteristic functions without increasing the variation.

\begin{definition}[Mollifier, \ls{RCA_0}]
  Let 
  \[\eta(x) :=
  \begin{cases}
    c \cdot \exp\left(\frac{1}{x^2-1}\right) & \text{if }\lvert x \rvert < 1\text{,} \\
    0 & \text{otherwise,}
  \end{cases}
  \quad
  \text{where }c:= \left( \int_{-1}^1 \exp\left(\frac{1}{x^2-1}\right) \, dx\right)^{-1}.
  \]
  The function $\eta$ is called a \emph{mollifier}.
  It is easy to see that $\eta$ is infinitely often differentiable provably in $\ls{RCA_0}$. By definition $\int_{-1}^1 \eta \, dx = 1$.

  Define  $\eta_\epsilon(x) := \frac{1}{\epsilon} \eta\left(\frac{x}{\epsilon}\right)$.
  We have that the support of $\eta_\epsilon$ is contained in $B(0,\epsilon)=\{ x\in \Real \mid \abs{x} < \epsilon \}$ and that $\int_{-1}^1 \eta_\epsilon \, dx = 1$.
\end{definition}
The integral of this mollifier can be used to smoothly approximate characteristic functions of intervals. For instance
\begin{equation}\label{eq:chiappr}
  x\longmapsto \int_{-1}^x \eta_\epsilon\left(y-\tfrac{1}{4}\right) - \eta_\epsilon\left(y-\tfrac{3}{4}\right)\, dy
\end{equation}
approximates $\chi_{\left[\frac{1}{4},\frac{3}{4}\right]}$ in $L_1$, see \prettyref{fig:chiappr}. Since the approximating function does not oscillate, the variation of it is not bigger that the variation of the approximated function.
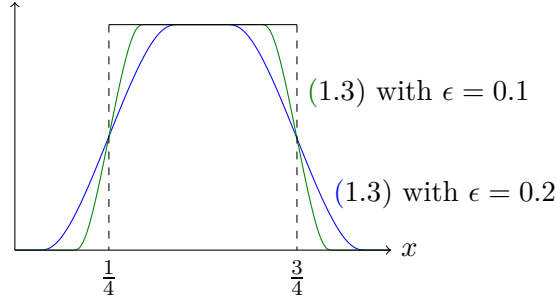
\begin{figure}
  \centering
  \begin{tikzpicture}[xscale=5, yscale=3]
    \draw[color=blue] plot[id=mol1,smooth] file{bv.mol1.table};
    \draw[color=blue] (0.82,0.25) node[right] {\eqref{eq:chiappr} with $\epsilon=0.2$};
    \draw[color=green!50!black] plot[id=mol2,smooth] file{bv.mol2.table};
    \draw[color=green!50!black] (0.75,0.7) node[right] {\eqref{eq:chiappr} with $\epsilon=0.1$};

    \draw (0.25,1)--(0.75,1);
    \draw[dashed] (0.25,1)--(0.25,0) node[below] {$\tfrac{1}{4}$};
    \draw[dashed] (0.75,1)--(0.75,0) node[below] {$\tfrac{3}{4}$};

    \draw[->] (0,0)--(1.0,0) node[right] {$x$};
    \draw[->] (0,0)--(0,1.1);
  \end{tikzpicture}
  \caption{Approximation of $\chi_{[\frac{1}{4},\frac{3}{4}]}$.}\label{fig:chiappr}
\end{figure}

The integral of such a mollifier $x\longmapsto \int_0^x \eta_\epsilon(y-z)\, dy$ is contained in $BV$. To see this let $(q_k)_k \subseteq \Rat[x]$ be a sequence approximating $\eta_\epsilon(x-z)$ in $L_1$, i.e.
\[
{\lVert q_k - \eta_\epsilon(x-z) \rVert}_{1} \le 2^{-k}
.\]
Since ${\lVert \eta_\epsilon(x-z) \rVert}_{1} \le 1$ we have that $\lVert q_k \rVert \le 2$.
Integrating $q_k$ we obtain a sequence of again rational polynomials $p_k(x) = \int_0^x q_k(y) \, dy$. By definition ${\left\lVert p_k-  \int_0^x \eta_\epsilon(y-z)\, dy\right\rVert}_{1} \le 2^{-k}$. Thus $(p_k)_k, v=2$ is a code for the integral of the mollifier.

\begin{proposition}[{\ls{WWKL_0}}]\label{pro:just1con}
   Let $f\colon [0,1]\longto \Real$ be a continuous function. If the variation of $f$ is bounded, that means that there exists a $v\in \Rat$ such that all sums in \eqref{eq:var} are bounded by $v$, then (the $L_1$-equivalence class of) $f$ belongs to $BV$.
\end{proposition}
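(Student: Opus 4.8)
The plan is to write down, explicitly, a code for the $L_1$-class of $f$ in the sense of \prettyref{def:bv} — a sequence $(p_k)_k\subseteq\Rat[x]$ with ${\lVert p_k-p_{k+1}\rVert}_1\le 2^{-k}$ together with a rational bound $v'$ on the variation — by the classical device of smoothing $f$ with a mollifier and then approximating the smooth approximants by rational polynomials, arranged so that neither step blows up the variation. One may assume $v>0$, since $v=0$ forces $f$ to be a.e.\ constant and then the claim is immediate. First extend $f$ to a continuous $\tilde f$ on $[-1,2]$ that equals $f(0)$ on $[-1,0]$ and $f(1)$ on $[1,2]$; this changes neither the variation (so every sum in \eqref{eq:var} built from points of $[-1,2]$ is still $\le v$) nor, essentially, the boundedness, as $\abs{\tilde f}\le\abs{f(0)}+v$. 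Then for $\epsilon\in(0,1)$ set $g_\epsilon:=\tilde f*\eta_\epsilon$, that is, $g_\epsilon(x)=\int_{-1}^{1}\tilde f(x-y)\,\eta_\epsilon(y)\,dy=\int_{-1}^{2}\tilde f(y)\,\eta_\epsilon(x-y)\,dy$ for $x\in[0,1]$ (the integrands supported where $\eta_\epsilon\neq 0$). The second representation shows that $g_\epsilon$ is infinitely often differentiable on $[0,1]$, with $g_\epsilon^{(n)}(x)=\int_{-1}^{2}\tilde f(y)\,\eta_\epsilon^{(n)}(x-y)\,dy$; in particular $g_\epsilon$ and $g_\epsilon'$ are continuous with explicit moduli of uniform continuity, built from $\abs{f(0)}+v$ and the Lipschitz constants of $\eta_\epsilon$ and $\eta_\epsilon'$.

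Two estimates carry the proof. First, \emph{mollification does not increase the variation}: for any $0\le t_1<\dots<t_n\le 1$, pulling the finite sum inside the mollifier integral gives $\sum_{i=1}^{n-1}\abs{g_\epsilon(t_i)-g_\epsilon(t_{i+1})}\le\int_{-1}^{1}\eta_\epsilon(y)\sum_{i=1}^{n-1}\abs{\tilde f(t_i-y)-\tilde f(t_{i+1}-y)}\,dy$, and for each fixed $y$ the points $t_i-y$ form an increasing sequence in $[-1,2]$, so the inner sum is a variation sum for $\tilde f$ and is $\le v$; hence $V(g_\epsilon)\le v\int_{-1}^{1}\eta_\epsilon(y)\,dy=v$ on $[0,1]$. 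Since $g_\epsilon$ is $C^1$ with an explicit modulus, comparing Riemann sums of $\abs{g_\epsilon'}$ with the telescoping sums $\sum_j\abs{g_\epsilon(x_{j+1})-g_\epsilon(x_j)}$ over finer and finer partitions yields the inequality we actually need, $\int_0^1\abs{g_\epsilon'(x)}\,dx\le V(g_\epsilon)\le v$. Second, \emph{the $L_1$-rate is controlled by the variation}: the $L_1$-modulus of a bounded-variation function obeys ${\lVert\tilde f(\cdot-h)-\tilde f\rVert}_{1,[0,1]}\le\abs{h}\,v$ for $\abs{h}\le 1$, whence ${\lVert g_\epsilon-f\rVert}_1\le\int_{-1}^{1}\eta_\epsilon(y)\,{\lVert\tilde f(\cdot-y)-\tilde f\rVert}_{1,[0,1]}\,dy\le v\int_{-1}^{1}\eta_\epsilon(y)\,\abs{y}\,dy\le\epsilon v$. (One could instead avoid this explicit rate: it suffices to know qualitatively that the mollifications of an $L_1$-function converge to it in $L_1$, and then to locate a suitable $\epsilon$ by an unbounded search.)

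Now assemble the code. Given $k$, pick a rational $\epsilon_k$ with $\epsilon_k v\le 2^{-k-2}$ and set $g_k:=g_{\epsilon_k}$, so ${\lVert g_k-f\rVert}_1\le 2^{-k-2}$ and $\int_0^1\abs{g_k'(x)}\,dx\le v$. Since $g_k'$ is continuous with an explicit modulus, the effective Weierstra{\ss} approximation theorem (density of $\Rat[x]$ in $C([0,1])$ with a rate) gives a $q_k\in\Rat[x]$ with ${\lVert q_k-g_k'\rVert}_\infty<\delta_k$ for any prescribed rational $\delta_k\le\min\{1,2^{-k-3}\}$; then pick $r_k\in\Rat$ with $\abs{r_k-g_k(0)}<\delta_k$ and put $p_k(x):=r_k+\int_0^x q_k(t)\,dt\in\Rat[x]$. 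Now $p_k'=q_k$, so $\int_0^1\abs{p_k'(x)}\,dx=\int_0^1\abs{q_k(x)}\,dx\le\int_0^1\abs{g_k'(x)}\,dx+\delta_k\le v+1$; also ${\lVert p_k-g_k\rVert}_\infty\le\abs{r_k-g_k(0)}+{\lVert q_k-g_k'\rVert}_\infty<2\delta_k\le 2^{-k-2}$, whence ${\lVert p_k-g_k\rVert}_1\le 2^{-k-2}$. Thus ${\lVert p_k-f\rVert}_1\le 2^{-k-1}$, so ${\lVert p_k-p_{k+1}\rVert}_1\le 2^{-k-1}+2^{-k-2}\le 2^{-k}$, and since ${\lVert p_k-f\rVert}_1\to 0$ the code $(p_k)_k$ represents the $L_1$-class of $f$. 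Hence $(p_k)_k$ together with $v':=v+1$ is a code for $f$ in $BV$ in the sense of \prettyref{def:bv}.

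The substance is not in the classical analysis above but in running it inside the weak base theory \ls{WWKL_0}: one must know that the bounded continuous function $f$ really does possess an $L_1$-class in the present sense, that the convolution integral defining $g_\epsilon$ exists and may be differentiated under the integral sign, and that the interchanges of finite summation with integration (in the variation estimate) and of integration with translation (in the $L_1$-modulus estimate) are legitimate — precisely the fragment of measure and integration theory for which \ls{WWKL_0}, as opposed to \ls{RCA_0}, is the natural setting (cf.\ Chapter~X.1 of \cite{sS09}). I expect this formalization, and in particular the integrability of $f$ together with the $L_1$-modulus estimate for bounded-variation functions, to be the main obstacle. Note, though, that because the bound $v$ is handed to us by hypothesis, no supremum over partitions ever has to be evaluated, so — unlike in the discontinuous case — \ls{ACA_0} is not required here.
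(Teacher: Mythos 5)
Your classical skeleton — extend $f$ by constants, mollify, observe that mollification does not increase variation sums, get the rate ${\lVert g_\epsilon-f\rVert}_1\le\epsilon v$, then approximate $g_\epsilon'$ in the sup norm by rational polynomials and integrate — is sound as classical analysis, and it would yield a different construction of the code than the paper uses. But the proof stops exactly where the reverse-mathematical content lies, and you say so yourself: ``the integrability of $f$'' is deferred as an expected-to-work formalization issue. Over \ls{RCA_0} a bounded continuous function without a modulus of uniform continuity need not be (effectively) Riemann integrable, so there is no $L_1$-class of $f$, no convolution $g_\epsilon=\tilde f\ast\eta_\epsilon$, no differentiation under the integral sign, and no translation estimate ${\lVert\tilde f(\cdot-h)-\tilde f\rVert}_1\le\abs{h}\,v$: none of the integrals your argument manipulates can even be formed in the base theory. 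The missing key fact is that \ls{WWKL_0} proves that every bounded continuous function on $[0,1]$ is effectively integrable, with a modulus of integrability; this is \prettyref{thm:uniint}, due to \cite{SY12}, and it is not a routine piece of the measure theory in \cite{sS09} — over \ls{RCA_0} it is actually equivalent to \lp{WWKL_0}. Until you invoke this (or prove something equivalent), what you have is a proof in a stronger system, not in \ls{WWKL_0}; identifying and using exactly this principle is the crux of the proposition, and it is the one step the paper's proof does not hand-wave.

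Once that theorem and its modulus $h$ are in hand, the paper's argument shows that the mollification and effective Weierstra{\ss} machinery are unnecessary: take the step functions $f_n$ with value $f(k\cdot 2^{-h(n)})$ on the dyadic grid of mesh $2^{-h(n)}$; their variation sums are variation sums of $f$, hence bounded by $v$, the modulus of integrability makes $(f_n)_n$ Cauchy in $L_1$ at the fixed rate $2^{-n+1}$, and \prettyref{lem:l1bv} finishes. Your route can also be pushed through with the modulus of integrability (all your kernels carry explicit moduli of uniform continuity, so the convolutions and the Fubini-type interchanges become legitimate), but it is substantially longer and the extra smoothness buys nothing for this statement. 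One further caveat: the parenthetical fallback of ``locating a suitable $\epsilon$ by an unbounded search'' is circular as stated, since recognizing that ${\lVert g_\epsilon-f\rVert}_1$ is small already presupposes an $L_1$-code for $f$, which is precisely what is being constructed; the explicit rate $\epsilon v$ (or the paper's modulus-based rate) is genuinely needed.
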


For the proof of this proposition we will need the following notation and theorem from \cite{SY12}.
A partition of $[0,1]$ is a finite set $\Delta=\big\{\, 0=x_0 \le \xi_1 \le x_1 \le \dots \le \xi_n \le x_n = 1 \,\big\}$. The mesh of $\Delta$ is $\lvert \Delta\rvert:=\max\{x_k -x_{k-1} \mid 1\le k\le n\}$.
The Riemann sum for $\Delta$ is $S_\Delta(f):= \sum_{k=1}^n f(\xi_k)(x_k-x_{k-1})$. 
The limit $\lim_{\lvert \Delta \rvert\to 0} S_\Delta(f)=\int_0^1 f(x)\,dx$ is the Riemann integral.

\begin{definition}
  A function $f$ is effectively integrable if there exists a $h\colon \Nat \longto \Nat$ such that for any partitions $\Delta_1,\Delta_2$ and  $n\in\Nat$,
  \[
  \lvert \Delta_1 \rvert < 2^{-h(n)} \AND \lvert \Delta_2 \rvert < 2^{-h(n)} \IMPL \lvert S_{\Delta_1(f)} - S_{\Delta_2(f)} \rvert < 2^{-n+1}
  .\]
  The function $h$ is called modulus of integrability for $f$.
\end{definition}
\begin{theorem}[\ls{RCA_0}, \cite{SY12}]\label{thm:uniint}
  The following are equivalent:
  \begin{enumerate}
  \item \lp{WWKL_0},
  \item Every bounded, continuous function on $[0,1]$ is effectively integrable.
  \end{enumerate}
\end{theorem}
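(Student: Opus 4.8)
The plan is to route both directions through the standard measure-theoretic reformulation of \lp{WWKL_0}: over \ls{RCA_0}, \lp{WWKL_0} is equivalent to the assertion that whenever a sequence $\langle I_k\rangle_k$ of open rational intervals covers $[0,1]$, one has $\sum_k \abs{I_k}\ge 1$ (see Chapter~X.1 of \cite{sS09}). The useful consequence is this: for a finite union $\bigcup_{k\le N} I_k$ of rational intervals its measure $\mu\bigl(\bigcup_{k\le N}I_k\bigr)$ (here $\mu$ denotes length/Lebesgue measure) is a computable rational, and for a cover $\langle I_k\rangle_k$ of $[0,1]$ these partial measures form a nondecreasing computable sequence whose supremum is exactly $1$. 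Hence, granted \lp{WWKL_0}, one can \emph{effectively} find, for any rational $\eta>0$, an $N$ with $\mu\bigl([0,1]\setminus\bigcup_{k\le N}I_k\bigr)<\eta$: one simply searches until the computable partial measure exceeds $1-\eta$. This effective monotone convergence to the \emph{known} limit $1$ is what replaces Heine--Borel compactness (which would require the full \lp{WKL_0}) in the argument below.

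For (1)$\Rightarrow$(2), let $f$ be bounded, say $\abs{f}\le M$, and fix $n$; set $\epsilon:=2^{-n}$. Using continuity of $f$ (formalizable in \ls{RCA_0}), enumerate all rational open intervals on which the oscillation of $f$ is $<\epsilon$; by pointwise continuity these cover $[0,1]$, so the covering form of \lp{WWKL_0} applies and yields a finite subfamily $I_1,\dots,I_N$ of small-oscillation intervals whose complement $B:=[0,1]\setminus\bigcup_{k\le N}I_k$ has measure $<\eta$ for a prescribed rational $\eta$. Passing to the $\lambda$\nobreakdash-interior $G_\lambda:=\bigl\{x\mid (x-\lambda,x+\lambda)\subseteq I_k\text{ for some }k\le N\bigr\}$, and using that $G_\lambda$ increases to $\bigcup_{k\le N}I_k$ as $\lambda\downarrow 0$, choose a rational $\lambda$ with $\mu\bigl([0,1]\setminus G_\lambda\bigr)$ still small, and set $h(n)$ so that $2^{-h(n)}<\lambda$. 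Given two partitions $\Delta_1,\Delta_2$ of mesh $<2^{-h(n)}$, compare them through their common refinement: every subinterval meeting $G_\lambda$ lies inside a single $I_k$ and so carries oscillation $<\epsilon$, contributing in total less than $\epsilon$; the remaining subintervals lie in $[0,1]\setminus G_\lambda$, carry oscillation $\le 2M$, and have total length bounded by $\mu\bigl([0,1]\setminus G_\lambda\bigr)$ plus a mesh-sized boundary correction. Choosing $\eta$ and $\lambda$ as fixed small multiples of $2^{-n}/M$ makes the total below $2^{-n+1}$, so $h$ is a modulus of integrability for $f$.

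For (2)$\Rightarrow$(1) I would argue contrapositively using the covering form. Assume \lp{WWKL_0} fails, so that some sequence $\langle I_k\rangle_k$ of rational open intervals covers $[0,1]$ while $\sum_k\abs{I_k}=s<1$. Writing $U_N:=\bigcup_{k\le N}I_k$ and $s_N:=\mu(U_N)$, the $s_N$ increase to some $s^{\ast}\le s<1$. From this data one constructs a bounded continuous $f$, built from continuous tent bumps attached to the $I_k$ with $0\le f\le 1$, whose Riemann sums at successively finer scales keep detecting the region that $U_N$ fails to cover. A modulus of integrability $h$ for $f$ would let one compute $\int_0^1 f\,dx$ to arbitrary precision; but reading the same modulus against the finite stages $U_N$ would convert this single real into the statement $s^{\ast}=1$, contradicting $s<1$. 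Hence $f$ cannot be effectively integrable, and (2) fails.

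The forward estimate and the reformulation of \lp{WWKL_0} are essentially bookkeeping; the genuine obstacle is the reverse direction, namely engineering the continuous function $f$ in (2)$\Rightarrow$(1) so that a putative modulus of integrability is forced to decide the measure of the classically full but here measure-deficient union $\bigcup_k I_k$. This is precisely the point at which the absence of \lp{WWKL_0} must be converted into a concrete failure of an effective integrability rate, and it is where the argument of \cite{SY12} does its real work.
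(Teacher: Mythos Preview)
The paper does not prove this theorem at all: it is quoted verbatim from \cite{SY12} and then used as a black box in the proof of \prettyref{pro:just1con}. There is therefore no ``paper's own proof'' to compare your proposal against.

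As for your sketch on its own merits: the forward direction via the open-cover formulation of \lp{WWKL_0} is the standard route and is essentially correct, though you should be a bit more careful that the enumeration of rational intervals of small oscillation really is a \ls{RCA_0}-computable enumeration from the code of $f$ (it is, via the $\Sigma^0_1$ modulus data carried by a code for a continuous function). For the reverse direction you yourself flag that the construction of $f$ is the crux and leave it at the level of ``tent bumps attached to the $I_k$''; as written this is not a proof but a pointer to one. In particular, the sentence ``reading the same modulus against the finite stages $U_N$ would convert this single real into the statement $s^\ast=1$'' does not explain \emph{how} a modulus of integrability for your $f$ is forced to compute the measure of $\bigcup_k I_k$, and that is exactly the nontrivial step. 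So your proposal is a reasonable outline of the forward implication together with an honest acknowledgment that the reversal requires the detailed construction carried out in \cite{SY12}.
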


\begin{proof}[Proof of \prettyref{pro:just1con}]
  Since the variation of $f$ is bounded, $f$ is bounded. Therefore by \prettyref{thm:uniint} the function $f$ is effectively integrable. In particular, there exists a modulus of integrability $h$.

  Let $f_n$ be the following sequence of step functions approximating $f$.
  \[
  f_n(x) := \sum\nolimits_k \chi_{[k \cdot 2^{-h(n)},(k+1) \cdot 2^{-h(n)})}  \cdot f(k \cdot 2^{-h(n)})  \quad \text{where $k$ is such that } x\in \left[ \tfrac{k}{2^{h(n)}}, \tfrac{k+1}{2^{h(n)}}\right)
  \]
  Since $f_n$ is a finite sum of characteristic functions of intervals, it belongs to $BV$.  The variation of $f_n$ is obviously bounded by $v$.
  By definition $\lVert f_n - f_{n+1}\rVert_1 < 2^{-n+1}$, thus $(f_n)_n$ converges in $L_1$-norm to an $f\in L_1$. 
  By \prettyref{lem:l1bv}, $f\in BV$.
\end{proof}

In the following we will use right continuous functions. Such a function $f\colon [0,1]\longto \Real$ will be coded by a sequence of real numbers $(x_q)_{q\in\Rat}$ index by rational numbers such that the limit from the right
\[
\lim_{q\searrow x,q\in \Rat} x_q =: f(x)
\]
exists. This definition makes sense in \ls{ACA_0}.

\begin{proposition}[\ls{ACA_0}]\label{pro:jus1}
  Let $f\colon [0,1]\longto \Real$ be a right continuous function.
  If the variation of $f$ is bounded as in \prettyref{pro:just1con} then (the $L_1$-equivalence class of) $f$ belongs to $BV$.
\end{proposition}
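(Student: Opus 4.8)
The plan is to follow the proof of \prettyref{pro:just1con} almost verbatim; the only new point is that the right continuous function $f$ can no longer be integrated via \prettyref{thm:uniint} but has to be sampled at dyadic points, and it is precisely this sampling that forces us into \ls{ACA_0}. Note first that $f$ is bounded, since every sum in \eqref{eq:var} --- in particular $\lvert f(x)-f(0)\rvert$ --- is at most $v$; thus also $V(f)\le v$.

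Working in \ls{ACA_0}, I would extract from the code of $f$ the reals $f(k/2^n)$ and form the step functions
\[
f_n(x) := f\!\left(\tfrac{k}{2^n}\right)\quad\text{where }k\text{ is such that }x\in\left[\tfrac{k}{2^n},\tfrac{k+1}{2^n}\right).
\]
As in the proof of \prettyref{pro:just1con}, each $f_n$ is a finite sum of characteristic functions of intervals and hence lies in $BV$, and its variation $\sum_k\bigl\lvert f(\tfrac{k+1}{2^n})-f(\tfrac{k}{2^n})\bigr\rvert$ is at most $V(f)\le v$, being a sum over a partition. Passing from $f_n$ to $f_{n+1}$ changes the function only on the right half of each dyadic interval, so ${\lVert f_n-f_{n+1}\rVert}_1\le\tfrac{1}{2^{n+1}}\sum_k\bigl\lvert f(\tfrac{k}{2^n})-f(\tfrac{2k+1}{2^{n+1}})\bigr\rvert\le\tfrac{v}{2^{n+1}}$. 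After a harmless reindexing, $(f_n)_n$ is a fast Cauchy sequence in $L_1$ with uniformly bounded bounds on the variation, so it converges in $L_1$ to some $g$ which, by \prettyref{lem:l1bv}, lies in $BV$.

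It then remains to check that $g$ is the $L_1$-equivalence class of $f$, i.e.\ that ${\lVert f_n-f\rVert}_1\to 0$. On each dyadic interval $I^n_k=[k/2^n,(k+1)/2^n)$ we have $\int_{I^n_k}\bigl\lvert f(x)-f(k/2^n)\bigr\rvert\,dx\le 2^{-n}\operatorname{osc}_{I^n_k}(f)$, and the sum of the oscillations of $f$ over a partition is at most $V(f)$; hence ${\lVert f_n-f\rVert}_1\le 2^{-n}V(f)\le 2^{-n}v\to 0$. Here one uses that a function of bounded variation is Riemann integrable, in fact effectively so with a modulus of integrability read off directly from $v$, so that these integrals exist in \ls{ACA_0}.

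The mathematical content --- that the sum of the oscillations over a partition is bounded by the variation, that refinement changes the sampled step function by a controlled amount, and that bounded variation gives effective Riemann integrability --- is classical and elementary; the main obstacle is the bookkeeping. Concretely, one has to formalize the sampling of $f$ at dyadic points inside \ls{ACA_0} (this, and nothing else, is where \ls{ACA_0} is needed), and then code the step functions $f_n$ as elements of $BV$, which is done exactly as in the proof of \prettyref{pro:just1con}, using mollified approximations of characteristic functions that do not increase the variation. Alternatively, one could mollify $f$ itself: convolution with $\eta_\epsilon$ yields a continuous function of variation at most $V(f)$ lying within $\epsilon\cdot v$ of $f$ in $L_1$, to which \prettyref{pro:just1con} and \prettyref{lem:l1bv} apply directly.
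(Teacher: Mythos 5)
Your proof is correct, and it follows the paper's construction (sampling $f$ at the dyadic points $k\cdot 2^{-n}$, coding the resulting step functions $f_n$ as elements of $BV$ with variation bound $v$, and finishing with \prettyref{lem:l1bv}), but it handles the key convergence step by a genuinely different and more elementary argument. The paper shows that $f_n\to f$ pointwise at every continuity point of $f$, proves that the set of discontinuities has measure zero using the variation bound, and then invokes the dominated convergence theorem of Avigad--Dean--Rute to get $L_1$-convergence to the equivalence class of $f$. You instead give a direct quantitative estimate: the sum of the oscillations of $f$ over the disjoint dyadic intervals is at most $V(f)\le v$, whence ${\lVert f_n-f\rVert}_1\le 2^{-n}v$ (and likewise ${\lVert f_n-f_{n+1}\rVert}_1\le 2^{-n-1}v$), so the sequence is fast Cauchy with an explicit rate and its $L_1$-limit is identified with $f$ without any measure-theoretic machinery. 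Both estimates are correct (the two sample points per interval are adjacent in the sorted list, so the paired differences are dominated by a single variation sum). What your route buys is an explicit modulus of convergence, no appeal to the dominated convergence theorem or to the measurability of the discontinuity set, and a clean localization of the use of \ls{ACA_0} to the evaluation of the right limits $f(k/2^n)$ from the code $(x_q)_q$; what it costs is that you must make sense, inside the formal system, of the integrals $\int_{I^n_k}\abs{f(x)-f(k/2^n)}\,dx$ for the merely right-continuous coded $f$ --- which you do address via effective Riemann integrability with modulus read off from $v$ (the paper's \prettyref{thm:uniint} only covers continuous functions, so this supplementary argument, though routine, is genuinely needed); the paper's heavier detour through almost-everywhere convergence avoids integrating $f$ directly. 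The closing alternative via mollifying $f$ itself is only a sketch (it again presupposes integrating $f$ against $\eta_\epsilon$), but it is not needed for your main argument.
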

\begin{proof}
  We approximate $f$ using the functions $f_n$ given by
  \[
  f_n(x) := \sum\nolimits_k \chi_{[k \cdot 2^{-n},(k+1) \cdot 2^{-n})}  \cdot f(k \cdot 2^{-n})  \quad \text{where $k$ is such that } x\in \left[ \tfrac{k}{2^n}, \tfrac{k+1}{2^n}\right)
  \]
  Like in the proof of \prettyref{pro:just1con} the variation of $f_n$ is bounded by the variation $v$ of $f$. The values of $f_n(x)$ are included in $[f(0)-v,f(0)+v]$.
  The functions $f_n(x)$ converge to $f$ on all points of continuity of $f$. We claim that the points of discontinuity of $f$ have measure $0$.
  Indeed, consider the measurable set (in the sense of \cite[Defintion~X.1.12]{sS09})
  \[
  A:= \bigcup_{n\in\Nat} \underbrace{\bigcap_{k\in \Nat} \left\{\, x \sizeMid  \max \left(\abs{f(x-2^{-k})-f(x)}, \abs{f(x+2^{-k})-f(x)}\right) > 2^{-n} \,\right\}}_{=:A_n}
  .\]
  This formula describes the points of discontinuity of $f$.
  Consider the set $A_n$ from above. If for any $n$ the set $A_n$ would have positive measure then there exists $2^n \cdot v$ many points in $A_n$ which would contradict the boundedness of the variation. Thus each $A_n$ has measure $0$ and with this $A$.
  Therefore, we can apply the dominated convergence theorem (see \cite[Theorem~4.3]{ADR12}) and obtain that $(f_n)_n$ converges in $L_1$ to (the $L_1$-equivalence class of) $f$ and by \prettyref{lem:l1bv} then $f\in BV$.
\end{proof}

\begin{lemma}[\lp{RCA_0}]
  For a continuous function $f\colon [0,1]\longto \Real$, such that $\abs{f'(x)}$ is effectively integrable, the variation $V(f)$ is bounded by $\int_0^1 \abs{f'(x)} \, dx$.

\end{lemma}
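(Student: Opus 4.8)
The plan is to bound every finite sum appearing in the supremum \eqref{eq:var} by $\int_0^1 \abs{f'(x)}\,dx$; recall that this integral is a well-defined real number because $\abs{f'}$ is effectively integrable, and fix a modulus of integrability $h$ for $\abs{f'}$. A first reduction: given $0\le t_1<\dots<t_n\le 1$, adjoining the points $0$ and $1$ to $\{t_1,\dots,t_n\}$ and then passing to any refinement only enlarges $\sum_{i=1}^{n-1}\abs{f(t_i)-f(t_{i+1})}$, by the triangle inequality. Hence it suffices to show, for each fixed $m\in\Nat$, that every partition $0=t_1<\dots<t_n=1$ with $t_{i+1}-t_i<2^{-h(m)}$ for all $i$ satisfies $\sum_{i=1}^{n-1}\abs{f(t_i)-f(t_{i+1})}\le\int_0^1\abs{f'(x)}\,dx+2^{-m+1}$; letting $m\to\infty$ then gives $V(f)\le\int_0^1\abs{f'(x)}\,dx$.

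So fix such an $m$ and such a partition. On each interval $[t_i,t_{i+1}]$ the mean value theorem --- available in \ls{RCA_0} for the differentiable function $f$ --- yields a point $\xi_i\in[t_i,t_{i+1}]$ with $\abs{f(t_i)-f(t_{i+1})}=\abs{f'(\xi_i)}\,(t_{i+1}-t_i)$, whence
\[
\sum_{i=1}^{n-1}\abs{f(t_i)-f(t_{i+1})}=\sum_{i=1}^{n-1}\abs{f'(\xi_i)}\,(t_{i+1}-t_i)=S_\Delta\bigl(\abs{f'}\bigr),
\]
where $\Delta$ is the tagged partition $\{\,0=t_1\le\xi_1\le t_2\le\dots\le\xi_{n-1}\le t_n=1\,\}$, which has mesh $<2^{-h(m)}$.

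It remains to observe that, by the definition of a modulus of integrability, any Riemann sum of $\abs{f'}$ over a partition of mesh $<2^{-h(m)}$ differs from $\int_0^1\abs{f'(x)}\,dx$ by at most $2^{-m+1}$; this is obtained from the two-sided Cauchy condition in that definition by letting the second partition there range over ever finer subdivisions. In particular $S_\Delta(\abs{f'})\le\int_0^1\abs{f'(x)}\,dx+2^{-m+1}$, which together with the previous display is exactly the estimate we needed.

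The one step that requires care is the appeal to the mean value theorem inside \ls{RCA_0}; this is unproblematic here because $f$ is presented together with its derivative. Everything else --- the reductions, the arithmetic with Riemann sums, and the extraction of the one-sided estimate from the definition of effective integrability --- is routine.
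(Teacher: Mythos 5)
Your argument is correct, but it takes a genuinely different route from the paper. The paper's proof is the direct one: it invokes the fundamental theorem of calculus to write $\abs{f(t_i)-f(t_{i+1})}=\abs{\int_{t_i}^{t_{i+1}}f'(x)\,dx}\le\int_{t_i}^{t_{i+1}}\abs{f'(x)}\,dx$ and then sums over the (disjoint) subintervals, using nonnegativity of $\abs{f'}$ to bound the total by $\int_0^1\abs{f'(x)}\,dx$; no refinement of partitions, no error terms, and no explicit use of the modulus $h$ appear. Your proof instead unwinds what that FTC step amounts to at the level of the representation: you refine the partition below the mesh $2^{-h(m)}$, convert the variation sum into a tagged Riemann sum of $\abs{f'}$ via the mean value theorem, and then use the Cauchy condition in the definition of effective integrability to compare that Riemann sum with the integral up to $2^{-m+1}$. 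What your route buys is transparency about where the modulus of integrability enters and it essentially reproves the needed instance of FTC inside \ls{RCA_0}; what the paper's route buys is brevity, at the cost of leaving the \ls{RCA_0}-formalization of $f(t_{i+1})-f(t_i)=\int_{t_i}^{t_{i+1}}f'(x)\,dx$ (and of additivity and monotonicity of the integral) implicit. One caution on your key step: the exact mean value theorem is indeed available in \ls{RCA_0}, but this is a nontrivial result (the usual proof via Rolle and the attainment of extrema needs \ls{WKL_0}; the \ls{RCA_0} proof is due to Hardin and Velleman), and your stated reason---that $f$ is presented together with its derivative---is not by itself the justification. If you want to avoid leaning on that result, an approximate mean value theorem (or the law of bounded change, provable in \ls{RCA_0} by a bisection argument, with $f'$ given as a coded function so that the intermediate value theorem applies) suffices: an error of $\epsilon\cdot 2^{-i}$ on the $i$-th subinterval contributes only $O(\epsilon)$ to the final estimate and disappears in the limit.
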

\proof
  For two points $t_1,t_2\in [0,1]$ we can estimate
  \[
  \abs{f(t_1)-f(t_2)} = \abs{\int_{t_1}^{t_2} f'(x)\, dx} \le \int_{t_1}^{t_2} \abs{f'(x)}\, dx
  .\]
  Therefore,
  \begin{align*}
    V(f) & = \sup_{0\le t_1 < \dots < t_n \le 1} \sum_{i=1}^{n-1} \abs{ f(t_i) - f(t_{i+1})} \\
    & \le \sup_{0\le t_1 < \dots < t_n \le 1} \sum_{i=1}^{n-1} \int_{t_i}^{t_{i+1}} \abs{f'(x)}\, dx 
    \le \int_0^1 \abs{f'(x)} \, dx\rlap{\hbox to 84 pt{\hfill\qEd}}
  .\end{align*}

\begin{proposition}[\ls{ACA_0}]\label{pro:jus2}
  For each $f\in BV$ there exists a right-continuous function $g$ which is almost everywhere equal to $f$ and with $V(g)<\infty$, or in other words the infimum in \eqref{eq:varl} is bounded.
\end{proposition}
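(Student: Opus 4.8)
The plan is to manufacture the representative $g$ directly from the coding data $(p_k)_k$, $v$ of $f$ by a Helly-type selection, and then to verify that the right-continuous function so obtained still represents the $L_1$-class $f$. The main obstacle is already visible here: the code of $f$ records only an $L_1$-equivalence class, whose pointwise values are a priori meaningless, so some form of compactness is unavoidable in order to produce any function of bounded variation in the class at all — and this is exactly where \ls{ACA_0} is needed.

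I would start by recording boundedness of the approximating polynomials. Telescoping ${\lVert p_k - p_{k+1}\rVert}_1 \le 2^{-k}$ gives ${\lVert p_k\rVert}_1 \le {\lVert p_1\rVert}_1 + 1$, and by the lemma preceding this proposition each $p_k$ is a continuous function with $V(p_k) \le \int_0^1 \abs{p_k'(x)}\,dx \le v$; hence $\abs{p_k(x) - p_k(y)} \le v$ for all $x, y$, so the $p_k$ are uniformly bounded, say by $M$, have variation $\le v$, and converge to $f$ in $L_1$.

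Now the key step. Working in \ls{ACA_0} I would pass, by the diagonal argument underlying Helly's selection theorem (which goes through in \ls{ACA_0}, iterating the Bolzano--Weierstra{\ss} principle for $[0,1]$ along the rational points), to a subsequence $(p_{k_j})_j$ such that $\rho(q) := \lim_j p_{k_j}(q)$ exists for every rational $q \in [0,1]$. For rational $q_1 < \dots < q_n$ one has $\sum_i \abs{\rho(q_{i+1}) - \rho(q_i)} = \lim_j \sum_i \abs{p_{k_j}(q_{i+1}) - p_{k_j}(q_i)} \le v$, so $\rho$ is of bounded variation on $\Rat \cap [0,1]$ and therefore has a one-sided limit at every point; I then set $g(x) := \lim_{q \searrow x,\, q\in\Rat}\rho(q)$ for $x \in [0,1)$ and $g(1) := \lim_{q\nearrow 1}\rho(q)$. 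By construction $g$ is right continuous, and approximating the nodes of an arbitrary partition of $[0,1]$ from the right by rationals shows $V(g) \le v < \infty$.

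It remains to check $g = f$ almost everywhere, which I would do by proving $p_{k_j} \to g$ in $L_1$; since also $p_{k_j}\to f$ in $L_1$, the two classes then agree. Fix $N$ and take a partition $0 = a_0 < a_1 < \dots < a_N = 1$ of mesh at most $1/N$ whose nodes lie among the cocountably many rationals at which $\rho$ is right continuous, so that $g(a_m) = \rho(a_m) = \lim_j p_{k_j}(a_m)$. On each $[a_{m-1}, a_m]$ I bound $\abs{g(x) - p_{k_j}(x)}$ by $\abs{g(x) - g(a_{m-1})} + \abs{g(a_{m-1}) - p_{k_j}(a_{m-1})} + \abs{p_{k_j}(a_{m-1}) - p_{k_j}(x)}$, integrate, and sum over $m$: the first and third groups of terms are each at most $\tfrac1N v$ by the variation bounds on $g$ and on the $p_{k_j}$, while the middle group tends to $0$ as $j\to\infty$ for fixed $N$. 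Letting $j \to \infty$ and then $N\to\infty$ yields ${\lVert g - p_{k_j}\rVert}_1 \to 0$, hence $g = f$ in $L_1$. I expect the selection step — and the bookkeeping needed to confirm that the right-continuous $\rho$-version lies in the class $f$ — to be the only parts requiring genuine care; everything else is routine.
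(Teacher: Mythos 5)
Your overall route differs from the paper's in a useful way: you build $g$ purely from the rational skeleton $\rho(q)=\lim_j p_{k_j}(q)$ and then verify membership in the class of $f$ by a direct Riemann-sum style $L_1$ estimate, whereas the paper first secures almost everywhere convergence of $(p_k)$ on an ascending sequence of closed sets of full measure (Remark~X.1.11 of \cite{sS09}), so that $g=f$ holds automatically on that set and the Bolzano--Weierstra{\ss} step at the rationals is only needed to define the right-continuous extension off a null set. Your selection step, the bounded-variation bound for $\rho$, the right continuity of $g(x):=\lim_{q\searrow x}\rho(q)$, and the bound $V(g)\le v$ are all fine and formalizable in \ls{ACA_0}.

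The gap is in the verification that $g=f$ almost everywhere, at the point where you choose partition nodes ``among the cocountably many rationals at which $\rho$ is right continuous.'' Since $\Rat$ is countable, cocountability gives you nothing, and in fact there may be \emph{no} rational at which $\rho$ is right continuous: the pointwise-on-$\Rat$ limit of polynomials with uniformly bounded variation can be (the restriction of) a function such as $x\mapsto \sum_{n:\,q_n<x}2^{-n}$, which has a right jump at every rational, so $g(q)=\rho(q{+})\neq\rho(q)=\lim_j p_{k_j}(q)$ for every rational node you could pick, and your middle group of terms need not tend to $0$. The argument is repairable: for a given $\epsilon>0$ only finitely many points carry a right jump of $\rho$ exceeding $\epsilon$ (the jumps sum to at most $v$), so you can choose rational nodes of mesh $\le 1/N$ avoiding them; then $\abs{g(a_m)-p_{k_j}(a_m)}\le \epsilon+\abs{\rho(a_m)-p_{k_j}(a_m)}$, the middle group contributes at most $\epsilon+o_j(1)$, and letting $j\to\infty$, then $N\to\infty$, then $\epsilon\to 0$ recovers $\lVert g-p_{k_j}\rVert_1\to 0$. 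With that repair (and a one-line remark bounding the interval containing the endpoint $0$, which must be a node even if $\rho$ jumps there), your proof goes through in \ls{ACA_0}; as written, however, the node-selection step fails.
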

\begin{proof}
  Let $(p_k)_k$, $v$ be a code for $f$.  By the previous lemma $V(p_k) \le v$.

  By \cite[Remark~X.1.11]{sS09} the polynomials $(p_k)_k$ converge to a function $g$ almost everywhere. To be precise there exists an ascending sequence of closed sets ${(C^f_n)}_n$ with measure $1-2^{-n}$ such that $(p_k(x))_k$ converges uniformly on  $C^f_n$ for each $n$. Let $M:=\bigcup_n C^f_n$.
  It is clear that $(p_k)_k$ converges to $g$ also in $L_1$-norm.

  The variation of $g$ with $t_i$ in \eqref{eq:var} restricted to be in $M$ is, as the pointwise limit of $p_k$, also bounded by $v$.

  To obtain the proposition the only thing left to show is how to extend $g$ to a proper function on the full unit interval.
  We claim that there exists a subsequence of ${(p_{k_j})}_j$ such that ${(p_{k_j}(x))}_j$ converges for all $x\in \Rat \cap [0,1]$. 
  To obtain this subsequence note that $\abs{p_k(x)} \le {\lVert f \rVert}_1 + v=:v'$. Let $q_i$ be an enumeration of $\Rat \cap [0,1]$ and consider for each $k$ the point $(p_k(q_i))_i \in {[-v',v']}^\Nat$. 
  Now ${[-v',v']}^\Nat$ is compact and $((p_k(q_i))_i)_k$ contains, by the Bolzano-Weierstra{\ss} principle, a convergent subsequence, which also satisfies the claim. See Lemma~III.2.5 and Theorem~III.2.7 of \cite{sS09}.

  Thus, we may assume that $\Rat \cap [0,1]\subseteq M$ by passing to a subsequence of $(p_k)$.
  Then let $g_+$ be the right continuous extension of $g$, i.e.
  \[
  g_+(x) := 
  \begin{cases}
    g(x) & \text{if } x\in M , \\
    \lim_{y\searrow x, \, y\in \Rat} g(y) & \text{otherwise.}
  \end{cases}
  \]

  The limit in the second case exists by the boundedness of the variation of $g$. Suppose that it does not exist then there would be an $\epsilon$ and an infinite sequence in $M$ oscillating at least $\epsilon$ at each step and, with this, the variation of $g$ would be infinite.
  
  The almost everywhere converging subsequence of $(p_k)_k$ follows by Remark~X.1.11~\cite{sS09} from \lp{WWKL}. The set $M$ is arithmetic and thus exists provably in \ls{ACA_0}. Also the extension $g_+$ of $g$ can be build in using a routine application of the Bolzano-Weierstra{\ss} principle again provable in \ls{ACA_0}.
\end{proof}

\begin{corollary}[Jordan decomposition, \ls{ACA_0}]\label{cor:jordan}
  For each function $f\in BV$ coded by $(p_k)_k,v$ there exists a
  measurable set $C$ such that $f$ restricted to $C$ is
  non-decreasing, that is, $\liminf p'_k(x) \ge 0$ for almost all
  $x\in C$, and $f$ restricted to the complement of $C$ is
  non-increasing, that is, $\limsup p'_k(x) \le 0$.
\end{corollary}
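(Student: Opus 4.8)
The plan is to let the set $C$ be exactly the one dictated by the formulation, namely
\[
C := \left\{\, x\in[0,1] \sizeMid \liminf_k p_k'(x) \ge 0 \,\right\}.
\]
The derivatives $p_k'$ are obtained uniformly and primitive-recursively from the code $(p_k)_k$, and the condition $\liminf_k p_k'(x)\ge 0$ is arithmetical, so $C$ is a measurable set in the sense of \cite[Definition~X.1.12]{sS09} which exists provably in \ls{ACA_0}, just as the set $M$ and the right-continuous extension $g_+$ were produced in the proof of \prettyref{pro:jus2}. For this $C$ the first assertion holds by definition: $\liminf_k p_k'(x)\ge 0$ for every, hence for almost every, $x\in C$. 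Thus the entire content of the corollary lies in the complementary statement, that $\limsup_k p_k'(x)\le 0$ for almost all $x\notin C$.

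Since the complement of $C$ is $\left\{\, x \sizeMid \liminf_k p_k'(x) < 0 \,\right\}$, this remaining claim is equivalent to showing that the oscillation set
\[
B := \left\{\, x\in[0,1] \sizeMid \liminf_k p_k'(x) < 0 < \limsup_k p_k'(x) \,\right\}
\]
is a null set. To attack this I would carry over the apparatus of \prettyref{pro:jus2}: after the same passage to a subsequence the polynomials converge, on the full-measure set $M=\bigcup_n C^f_n$, to the right-continuous representative $g$ whose variation $V(g)$ is bounded by $v$. Alongside this I would track the positive and negative variation functions $P_k(x):=\int_0^x (p_k'(y))^+\,dy$ and $N_k(x):=\int_0^x (p_k'(y))^-\,dy$, which are non-decreasing and satisfy $P_k(1)+N_k(1)=\int_0^1\abs{p_k'}\,dy\le v$. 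A routine application of the Bolzano-Weierstra{\ss} principle, exactly as in \prettyref{pro:jus2}, yields non-decreasing limits $P,N$ with $g=g(0)+P-N$ almost everywhere, and $C$ should then agree up to a null set with the positive set of the Hahn decomposition of the signed derivative measure $dP-dN$.

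The decisive step, and the one I expect to be the main obstacle, is establishing that $B$ is null. The difficulty is structural: the code controls only $\int_0^1\abs{p_k'}$ for each fixed $k$ and the $L_1$-distances ${\lVert p_k-p_{k+1}\rVert}_1$, but provides no bound whatsoever on $\int_0^1\abs{p_k'-p_{k+1}'}$, so a priori the sign of $p_k'(x)$ may flip from index to index on a large set. The heart of the proof must therefore be a measure-theoretic estimate showing that sign-oscillation of $p_k'$ on a set of positive measure is incompatible with the simultaneous facts that $\int_0^1\abs{p_k'}\le v$ and that $p_k\to g$ in $L_1$ with $V(g)\le v$: concretely, from the two guaranteed sign-patterns on $B$ one should be able to assemble, via Fatou's lemma and the almost-everywhere convergence on $M$, Riemann sums of the form in \eqref{eq:var} for $g$ that exceed any prescribed multiple of $v$, contradicting the bound on the variation. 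Once $B$ is shown to be null, the identification of $C$ with the positive set of the Jordan decomposition of $P-N$, and hence the non-increasing behaviour $\limsup_k p_k'(x)\le 0$ off $C$, follows routinely from \prettyref{pro:jus2} together with the monotonicity of $P$ and $N$; all the set constructions involved remain arithmetical and so stay within \ls{ACA_0}.
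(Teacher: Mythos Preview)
Your route and the paper's diverge at the very first step. You set $C:=\{x\mid \liminf_k p_k'(x)\ge 0\}$, which makes the first clause hold by definition and reduces everything to showing that the oscillation set $B=\{x\mid \liminf_k p_k'(x)<0<\limsup_k p_k'(x)\}$ is null. The paper instead passes immediately to the right-continuous representative $g$ furnished by \prettyref{pro:jus2} and defines
\[
C := \bigcap_{m}\bigcup_{n}\bigcap_{k>n}\bigl\{\,x \mid g(x) < g(x+2^{-k}) + 2^{-m}\,\bigr\},
\]
i.e.\ through the local right-increment behaviour of the single function $g$; the complement is then identified, almost everywhere, with the dual set by appealing to the bounded variation of $g$ alone. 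The sequence $(p_k')_k$ never reappears in the paper's argument once \prettyref{pro:jus2} has been invoked.

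There is a genuine gap in your plan, precisely where you anticipate it. The sketch for showing $B$ null---assembling Riemann sums for $g$ from the sign oscillation of $p_k'$---does not go through, because the oscillation you possess is in the index $k$ at a fixed $x$, not in the spatial variable at a fixed $k$. From $\liminf_k p_k'(x)<0<\limsup_k p_k'(x)$ on a set of positive measure you cannot manufacture a partition $0\le t_1<\dots<t_n\le 1$ with $\sum_i\lvert g(t_{i+1})-g(t_i)\rvert$ large; different points of $B$ witness the two signs at different, unrelated indices $k$. Your own observation that nothing bounds $\int_0^1\lvert p_k'-p_{k+1}'\rvert$ is exactly the obstruction: one can write down codes $(p_k)_k$ for the zero function in which each $p_k$ is a small-amplitude, high-frequency sawtooth (suitably polynomially approximated) with $\lvert p_k'\rvert\equiv 1$ and the sign pattern of $p_k'$ refining dyadically, so that $p_k'(x)$ flips sign along $k$ for almost every $x$ while $g\equiv 0$ has zero variation. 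No Fatou or Riemann-sum argument for $g$ can then yield a contradiction. The auxiliary monotone limits $P,N$ do not close the gap either, since the Hahn decomposition of $dP-dN$ is tied to the particular subsequence you extract, not to the pointwise $\liminf$ and $\limsup$ of $p_k'$ along the full original code. The paper sidesteps the whole difficulty by never examining $p_k'$ pointwise: once $g$ is in hand, the null-set argument is purely about a single right-continuous function of bounded variation, where the analogous oscillation set is controlled directly by $V(g)\le v$.
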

\proof
  Let $g$ be the right-continuous function as in \prettyref{pro:jus2} and let $C$ be the following measurable set
  \begin{align*}
    C &:= \bigcap_{m\in\Nat} \bigcup_{n\in\Nat} \bigcap_{k>n} \{ x \mid g(x) < g(x+2^{-k}) + 2^{-m} \}
    .
    \intertext{Since $g$ has bounded variation the complement of $C$ is almost everywhere equal to }
    [0,1]\setminus C &= \bigcap_{m\in\Nat} \bigcup_{n\in\Nat} \bigcap_{k>n} \{ x \mid g(x) > g(x+2^{-k}) - 2^{-m}  \}
    .
  \end{align*}
  The result follows.\qed
Independently, the Jordan decomposition was investigated by Nies, Yokoama et al. in \cite{LogicBlog2013}.

\section{Comparison to other spaces}\label{sec:comp}

\subsection{Sobolev space $W^{1,1}$}

Our motivation for representing the space $BV$ in the way we did in \prettyref{def:bv} is that in this way $BV$ lies between $L_1$ and the Sobolev space $W^{1,1}$. We believe that this is the right way to represent this space since $BV$ is in practice almost always used as an intermediate space between $L_1$ and $W^{1,1}$. 

Recall that the Sobolev space $W^{1,1}:= W^{1,1}([0,1])$ is the coded separable Banach space over the rational polynomials $\Rat[x]$ together with the following norm
\[
{\lVert p \rVert}_{W^{1,1}} := {\lVert p \rVert}_{1} + {\lVert p' \rVert}_{1}
.\]
From this definition it is obvious that $W^{1,1}$ is a subspace of $BV$.

\begin{proposition}[\ls{RCA_0}]
  $
  W^{1,1} \subseteq  BV \subseteq L^1
  $
  and all of these  inclusions are strict.
\end{proposition}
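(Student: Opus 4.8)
The two inclusions themselves are immediate once one unwinds \prettyref{def:bv}. A point of $W^{1,1}$ is a sequence $(p_k)_k\subseteq\Rat[x]$ with $\lVert p_k-p_{k+1}\rVert_1+\lVert p_k'-p_{k+1}'\rVert_1\le 2^{-k}$, so in particular $\lVert p_k-p_{k+1}\rVert_1\le 2^{-k}$ and $\int_0^1\lvert p_k'(x)\rvert\,dx\le \int_0^1\lvert p_0'(x)\rvert\,dx+1$; choosing a rational $v$ above this bound exhibits $(p_k)_k,v$ as a point of $BV$ coding the same $L_1$-element, and simply forgetting $v$ turns any point of $BV$ into a point of $L_1$. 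So the content of the proposition is the strictness, and the plan is to use the witnesses $\chi:=\chi_{[1/4,3/4]}$ for $W^{1,1}\subsetneq BV$ and $f(x):=x^{-1/2}$ for $BV\subsetneq L_1$.

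For $W^{1,1}\subsetneq BV$ I would argue as follows. Membership $\chi\in BV$ comes from the mollifier construction preceding \prettyref{fig:chiappr}: the functions in \eqref{eq:chiappr} are differences of integrals of mollifiers, hence lie in $BV$; each rises monotonically and then falls monotonically, so its variation is at most $2$, and it differs from $\chi$ only on two intervals of width $2\epsilon$, so $\lVert\,\cdot\,-\chi\rVert_1\le 4\epsilon$. Taking $\epsilon=2^{-n-2}$ and replacing these functions by rational polynomial $L_1$-approximations, $\chi$ becomes an $L_1$-limit at a fixed rate of functions with uniformly bounded variation, so $\chi\in BV$ by \prettyref{lem:l1bv}. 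To see $\chi\notin W^{1,1}$, I would use the elementary estimate that for any $q\in\Rat[x]$ there is $x_0\in[0,1]$ with $\lvert q(x_0)\rvert\le\lVert q\rVert_1$ (otherwise $\int_0^1\lvert q\rvert>\lVert q\rVert_1$), whence $\lVert q\rVert_\infty\le\lvert q(x_0)\rvert+\int_0^1\lvert q'\rvert\le\lVert q\rVert_1+\lVert q'\rVert_1=\lVert q\rVert_{W^{1,1}}$. Applied to $p_k-p_{k+1}$, any $W^{1,1}$-code $(p_k)_k$ converges uniformly at rate $2^{-k}$, hence (in $\ls{RCA_0}$) converges to a continuous function $g$ which is also its $L_1$-limit. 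If this code were $L_1$-equal to $\chi$, then $g=\chi_{[1/4,3/4]}$ almost everywhere, so a continuous $g$ would be $0$ on a dense subset of $[0,1/4)$ and $1$ on a dense subset of $(1/4,3/4)$, forcing $g(1/4)=0$ and $g(1/4)=1$, a contradiction.

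For $BV\subsetneq L_1$ I would first note that $x^{-1/2}$ is integrable ($\int_0^1x^{-1/2}\,dx=2$) and is a genuine point of $L_1$ in $\ls{RCA_0}$: the truncations $g_M:=\min(x^{-1/2},M)$ are bounded and continuous with $\lVert g_M-x^{-1/2}\rVert_1=1/M$, each is approximable by rational polynomials, and diagonalising produces the required Cauchy sequence. Now suppose $(p_k)_k,v$ were a $BV$-code $L_1$-equal to $f$. Since $p_k$ is a polynomial, $\lvert p_k'\rvert$ is effectively integrable, so the lemma preceding \prettyref{pro:jus2} gives $V(p_k)\le\int_0^1\lvert p_k'\rvert\,dx\le v$, and then $\lVert p_k\rVert_\infty\le\lVert p_k\rVert_1+V(p_k)\le\lVert p_k\rVert_1+v\le C$ uniformly in $k$. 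But for every $\delta\in(0,1)$,
\[
2\sqrt{\delta}=\int_0^\delta x^{-1/2}\,dx\le\int_0^\delta\lvert p_k(x)\rvert\,dx+\lVert p_k-f\rVert_1\le C\delta+2^{-k+1},
\]
and letting $k\to\infty$ yields $2\sqrt{\delta}\le C\delta$ for all $\delta>0$, which is absurd as $\delta\to 0$. Hence $f\notin BV$.

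The genuinely delicate points are the bookkeeping ones rather than the analysis: one must check in $\ls{RCA_0}$ that $\chi_{[1/4,3/4]}$ and $x^{-1/2}$ really are points of the coded spaces (via the mollifier approximations, respectively truncation-and-diagonalisation), and one must take care that every contradiction is extracted purely from $L_1$-distances together with uniform estimates on the honest polynomials $p_k$, never from illegitimate pointwise evaluation of an $L_1$-equivalence class. The inequality $\lVert q\rVert_\infty\le\lVert q\rVert_{W^{1,1}}$ (and its variant $\lVert q\rVert_\infty\le\lVert q\rVert_1+V(q)$) for polynomials $q$ is the device that bridges this gap, and producing it cleanly is the main thing to get right.
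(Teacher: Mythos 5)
Your proof is correct, but it takes a different route from the paper's, which is much terser: for $BV\subsetneq L_1$ the paper uses the continuous function $x\sin(2\pi/x)$ (bounded, but of unbounded variation) and simply asserts that unbounded variation excludes it from $BV$, while for $W^{1,1}\subsetneq BV$ it uses $\chi_{[1/2,1]}$ and argues heuristically that its derivative would have to be $0$ almost everywhere and infinite at $1/2$. You instead work directly with the codes: for $W^{1,1}\subsetneq BV$ you prove the embedding $\lVert q\rVert_\infty\le\lVert q\rVert_1+\lVert q'\rVert_1$ for polynomials, conclude that every $W^{1,1}$-point has a continuous representative, and derive the contradiction from continuity versus the jump of $\chi$ (the paper's choice $\chi_{[1/2,1]}$ versus your $\chi_{[1/4,3/4]}$ is immaterial, and the mollifier argument for membership in $BV$ is exactly the one sketched before \prettyref{fig:chiappr}); for $BV\subsetneq L_1$ you pick the unbounded integrable function $x^{-1/2}$ and use the variant bound $\lVert p_k\rVert_\infty\le\lVert p_k\rVert_1+V(p_k)\le C$ together with the estimate $2\sqrt{\delta}\le C\delta+2^{-k+1}$ to rule out any $BV$-code. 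What your version buys is a fully quantitative argument carried out in \ls{RCA_0} on the honest polynomials $p_k$, never evaluating an $L_1$-class pointwise and never needing the (classically obvious, but in the coded setting not entirely free) implication ``unbounded pointwise variation implies no $BV$-code''; what the paper's version buys is brevity, at the cost of leaving those verifications to the reader. One small polish point in your write-up: rather than speaking of $g=\chi$ ``almost everywhere'' and dense subsets, in \ls{RCA_0} it is cleaner to note that $\int_0^{1/4}\lvert g\rvert\,dx=0$ and $\int_{1/4}^{3/4}\lvert g-1\rvert\,dx=0$ force, by continuity of $g$, both $g(1/4)=0$ and $g(1/4)=1$; this avoids any measure-theoretic vocabulary and matches the purely integral bookkeeping you use elsewhere.
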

\begin{proof}
  The inclusions are clear. We show only the strictness.
  The function 
  \[
  f(x) :=
  \begin{cases}
    x \cdot \sin(1/x \cdot 2\pi) & \text{if $x>0$,}  \\
    0 & \text{otherwise,}
  \end{cases}
  \]
  is continuous on $[0,1]$ and therefore contained in $L^1$. However, it has unbounded variation and therefore $f\notin BV$.
  A characteristic function of a nontrivial interval, say $\chi_{[\frac{1}{2},1]}$, is contained in $BV$. It is not contained in $W^{1,1}$, because the derivative of $\chi_{[\frac{1}{2},1]}$ would be almost everywhere $0$ and infinite at $\frac{1}{2}$, which is impossible.
\end{proof}

\subsection{$BV$ as dual space}

It is well known that the space $BV$ is isomorphic to the dual space of $C([0,1])$, i.e.\ the space of uniformly continuous and  linear functionals defined on the continuous functions on $[0,1]$ with ${\lVert \cdot \rVert}_\infty$-norm.
Before we can show this we will need some more properties of mollifiers.
\begin{definition}[Mollification of a function, \ls{RCA_0}]
  Let $f\colon [0,1]\longto \Real$ be a continuous, effectively integrable function.
  We extend $f$ to $[-1,2]$ by setting $f(x) = f(1-x)$ for $x > 1$ and $f(x) = f(-x)$ for $x < 0$. 
  We define the \emph{mollification of $f$} to be
  \begin{equation}\label{eq:defmollification}
  f^\epsilon(x) := (f \ast \eta_\epsilon)(x) := \int_{x-\epsilon}^{x+\epsilon} \eta_\epsilon(x-y) f(y) \, dy = \int_{-\epsilon}^{\epsilon} \eta_\epsilon(y)f(x-y) \, dy
  \end{equation}
  for $x\in [0,1]$ and $0< \epsilon \le 1$.

  For a function $f\in L_1$ the mollification is defined in the same way. (The extension of $f$ can be defined pointwise for each $(p_k)_k$ coding $f$.)
\end{definition}
\begin{proposition}[\ls{RCA_0}]
  Let $f$ be as above.
  \begin{enumerate}[label=(\roman*)]
  \item\label{enum:molprop:1} $f^\epsilon$ is infinitely often differentiable.
  \item\label{enum:molprop:2} If $f$ is uniformly continuous, then $f^\epsilon \xrightarrow{\epsilon \to 0} f$ uniformly. If $f$ has additionally a modulus of uniform continuity then there exists a modulus of convergence for $f^\epsilon \xrightarrow{\epsilon\to 0} f$.
  \end{enumerate}
\end{proposition}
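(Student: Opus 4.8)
The plan is to prove both clauses by the standard ``differentiate the kernel'' computation, keeping explicit rates so that the argument goes through in \ls{RCA_0}.

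For~(i) I would work with the second form of the convolution in \eqref{eq:defmollification}, $f^\epsilon(x)=\int \eta_\epsilon(x-y)f(y)\,dy$, where the integral may be taken over $[-1,2]$ since $\eta_\epsilon$ is supported in $B(0,\epsilon)$ and $x\in[0,1]$, $\epsilon\le 1$. The core claim is that $f^\epsilon$ is differentiable with $(f^\epsilon)'=f\ast\eta_\epsilon'$. To establish it I would bound the error in the difference quotient:
\begin{align*}
  \abs{\tfrac1h\bigl(f^\epsilon(x+h)-f^\epsilon(x)\bigr)-(f\ast\eta_\epsilon')(x)}
  &= \abs{\int_{-1}^2\Bigl(\tfrac1h\int_0^h\bigl(\eta_\epsilon'(x-y+s)-\eta_\epsilon'(x-y)\bigr)\,ds\Bigr)f(y)\,dy} \\
  &\le \lVert\eta_\epsilon''\rVert_\infty\cdot\abs{h}\cdot\int_{-1}^2\abs{f(y)}\,dy ,
\end{align*}
the equality being the fundamental theorem of calculus applied to $\eta_\epsilon$ (together with linearity of the integral), and $\lVert\eta_\epsilon''\rVert_\infty<\infty$ --- which exists since $\eta$ is $C^\infty$ --- being the Lipschitz constant of $\eta_\epsilon'$. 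The right-hand side tends to $0$ as $h\to 0$ at a rate uniform in $x$, which gives the claim. Since $\eta_\epsilon'$ is again $C^\infty$ and supported in $B(0,\epsilon)$, I would then iterate: the continuous functions $g_k:=f\ast\eta_\epsilon^{(k)}$ form a sequence (available uniformly in $k$; each $g_k$ is Lipschitz with constant $\lVert\eta_\epsilon^{(k+1)}\rVert_\infty\int\abs{f}$, hence a legitimate code for a continuous function with a modulus of uniform continuity), one has $g_0=f^\epsilon$, and the displayed estimate with $\eta_\epsilon^{(k)}$ in place of $\eta_\epsilon$ yields $g_k'=g_{k+1}$ for every $k$. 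Hence $f^\epsilon$ is infinitely often differentiable.

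For~(ii) I would use $\int\eta_\epsilon=1$ and $\eta_\epsilon\ge 0$ to write, for $x\in[0,1]$ and $\epsilon\le 1$,
\[
  f^\epsilon(x)-f(x)=\int_{-\epsilon}^{\epsilon}\eta_\epsilon(y)\bigl(f(x-y)-f(x)\bigr)\,dy ,
  \qquad\text{hence}\qquad
  \abs{f^\epsilon(x)-f(x)}\le\sup_{\abs{y}\le\epsilon}\abs{f(x-y)-f(x)} ,
\]
the values $f(x-y)$ being read in the extension of $f$ used in \eqref{eq:defmollification}, which is again uniformly continuous and admits the same modulus of uniform continuity as $f$. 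If $f$ is uniformly continuous, then given $n$ we may pick $m$ with $\abs{f(y)-f(y')}\le 2^{-n}$ whenever $\abs{y-y'}\le 2^{-m}$; then for every $\epsilon\le 2^{-m}$ the bound above gives $\abs{f^\epsilon(x)-f(x)}\le 2^{-n}$ simultaneously for all $x\in[0,1]$, i.e.\ $f^\epsilon\to f$ uniformly. If moreover $\omega_f$ is a modulus of uniform continuity for $f$, the same computation shows that (a reindexing of) $\omega_f$ is a modulus of convergence for $f^\epsilon\xrightarrow{\epsilon\to0}f$.

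The two displayed identities and the final uniform-continuity estimate are routine. The only points needing care in \ls{RCA_0} are that the convolutions $f\ast\eta_\epsilon^{(k)}$ are legitimate objects --- which holds because $f$ is effectively integrable (or an $L_1$\nobreakdash-point) and $\eta_\epsilon^{(k)}$ is a bounded $C^\infty$ function, so the integrand is again effectively integrable --- and the interchange turning the difference quotient of an integral into the integral of a difference quotient, together with the existence of the sequence $(g_k)_k$ and of its differentiability data. I expect this last point --- turning the informal ``differentiation under the integral sign'' into an explicit $x$\nobreakdash-uniform rate estimate and then iterating it --- to be the main (though entirely elementary) technical obstacle.
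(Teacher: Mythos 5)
Your proposal is correct and follows essentially the same route as the paper: for (i) the paper also differentiates under the integral sign by showing the difference quotients of the kernel converge uniformly in $y$ (your FTC/$\lVert\eta_\epsilon''\rVert_\infty$ bound is just an explicit quantitative version of that, and your iteration supplies the higher derivatives, which the paper explicitly omits by proving only first differentiability), and for (ii) the paper uses exactly your estimate $\abs{f^\epsilon(x)-f(x)}\le\sup_{\abs{y}\le\epsilon}\abs{f(x-y)-f(x)}$ via $\int\eta_\epsilon=1$, reading off the modulus of convergence from the modulus of uniform continuity.
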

\begin{proof}
  \ref{enum:molprop:1}: We show only that $f^\epsilon$ differentiable.
  \begin{equation*}
    \frac{f^\epsilon(x+h) - f^\epsilon(x)}{h} = \frac{1}{\epsilon} \int_0^1 \frac{1}{h} \left( \eta\left(\frac{x+h-y}{\epsilon}\right) - \eta\left(\frac{x-y}{\epsilon}\right)\right) f(y) \, dy
  \end{equation*}
  Now for $h\to 0$ we have that $\frac{1}{h}\left(\eta\big(\frac{x+h-y}{\epsilon}\big) - \eta\big(\frac{x-y}{\epsilon}\big)\right)$ converges uniformly in $y$ to $\frac{d}{dx} \eta\left(\frac{x-y}{\epsilon}\right)$. Therefore one can exchange integration and taking the limit of $h$ and obtains that 
  \begin{equation}\label{eq:moldif2}
    \frac{d}{dx} f^\epsilon(x) = \frac{1}{\epsilon} \int_0^1 \frac{d}{dx}\big(\eta(x-y)\big) \cdot f(y)\, dy
  \end{equation}
  exists.

  \ref{enum:molprop:2}: 
  \begin{align*}
    \abs{f^\epsilon(x) - f(x)} &= \abs{\int_{x-\epsilon}^{x+\epsilon} \eta_\epsilon(x-y) (f(y)-f(x)) \, dy}  \\
      & \le \sup_{y\in [x-\epsilon,x+\epsilon]} \abs{f(y)-f(x)}\  \xrightarrow{\epsilon\to 0} 0 \qquad \text{by uniform continuity}.
  \end{align*}
  Thus from a modulus of uniform continuity one can define a uniform modulus of convergence of $f^\epsilon(x) \xrightarrow{\epsilon \to 0} f(x)$.
\end{proof}

For a code $(p_k)_k, v$ for an $f\in BV$ let $T$ be the following linear functional defined on all $h\in C([0,1])$.
\begin{align}
T(h) & := \lim_{k\to \infty} \int_0^1 h \cdot p_k' \, dx  \label{eq:cdinf}
\shortintertext{
  Note that $T$ will depend not only on the $L_1$-class of $f$ but also on the specific sequence of rational polynomials. See \prettyref{pro:intpart} below.
  We can estimate}
& T(h) \le {\lVert h \rVert}_\infty \cdot v .\notag
\end{align}
Thus $T$ is continuous and therefore in the dual $C^*([0,1])$. It is clear that this is provable in \ls{ACA_0}. (For a formal definition of bounded functionals and the dual space, see Definitions~II.10.5 and X.2.3 in \cite{sS09}.)

For the other direction let $T\colon C([0,1]) \longto \Real$ be a linear, continuous functional with $\lVert T\rVert \le v$ for some $v\in \Real$. 
We can continuously  extend $T$ to functions of the form $\chi_{(y,1]}$ (and linear combinations thereof) by approximating this function using the mollifier, cf.~\eqref{eq:chiappr}.
We claim that the function 
\[
m(y) := T(\chi_{(y,1]})
\]
has bounded variation. Indeed for $0\le t_1 < \dots < t_n \le 1$ we have
\begin{align*}
  \sum_{i=1}^{n-1} \abs{m(t_{i+1}) - m(t_i)} &= \sum_{i=1}^{n-1} e_i\, {\left(m(t_{i+1}) - m(t_i)\right)} \hphantom{\le v} \text{for suitable }e_i\in \{-1,1\} \\
  & = T\left(\sum_{i=1}^{n-1} e_i \, \chi_{(t_i,t_{i+1}]}\right)  \\
  & \le v \hphantom{= \sum_{i=1}^{n-1} e_i\, {\left(m(t_{i+1}) - m(t_i)\right)}} \text{since the sum is bounded by $1$.}
\end{align*}
It is clear that $m$ is right continuous. Thus, by \prettyref{pro:jus1} we have $m\in BV$. Now let $h\in C([0,1])$ be a uniformly continuous function. The function $h$ can be approximated in ${\lVert \cdot \rVert}_\infty$ by functions of the form 
\[
h_n(x) :=  h\left(\frac{i}{2^n}\right) \quad\text{if } x\in\left[\frac{i}{2^n},\frac{i+1}{2^n}\right)
.\]
(A modulus of convergence can be defined from a modulus of uniform continuity of $h$.)
Then 
\begin{align*}
  T(h) & = T\left(\lim_{n\to \infty} h_n\right) = \lim_{n\to\infty} T(h_n)  \\
  & = \lim_{n\to \infty} \sum_i \left[h\!\left(\frac{i}{2^n}\right) \cdot  \left(m\!\left(\frac{i+1}{2^n}\right)-m\!\left(\frac{i}{2^n}\right)\right)\right] 
  \shortintertext{(for a suitable choice of $(p_k)_k$ converging pointwise at all $q\in[0,1]\cap\Rat$, see proof of \prettyref{pro:jus2})}
  & = \lim_{n\to\infty} \lim_{k\to\infty}\sum_i \left[h\!\left(\frac{i}{2^n}\right) \cdot  \left(p_k\!\left(\frac{i+1}{2^n}\right)-p_k\!\left(\frac{i}{2^n}\right)\right)\right] 
  \shortintertext{(by uniform convergence in $n$)}
  & =  \lim_{k\to\infty} \lim_{n\to\infty}\sum_i \left[h\!\left(\frac{i}{2^n}\right) \cdot  \left(p_k\!\left(\frac{i+1}{2^n}\right)-p_k\!\left(\frac{i}{2^n}\right)\right)\right] \\
& = \lim_{k\to \infty} \int_0^1 h \cdot p_k' \, dx .
\end{align*}

These observations give rise to the following propositions.
\begin{proposition}[\ls{ACA_0}]\label{pro:bccduleq1}
  Each (code of an) $f\in BV$ induces a bounded linear functional $T\in C^*([0,1])$ given by~\eqref{eq:cdinf}.
\end{proposition}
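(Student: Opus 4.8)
The plan is to verify, in this order, that (i) each functional $h\mapsto\int_0^1 h\cdot p_k'\,dx$ has norm $\le v$, uniformly in $k$; (ii) the limit \eqref{eq:cdinf} defining $T$ exists; (iii) $T$ is linear; and (iv) $\lVert T\rVert\le v$. The only ingredient beyond \ls{RCA_0} will be a single application of the Bolzano-Weierstra{\ss} principle, so the argument lives in \ls{ACA_0} as claimed.

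Step (i) is just the estimate $\bigl\lvert\int_0^1 h\cdot p_k'\,dx\bigr\rvert\le\lVert h\rVert_\infty\int_0^1\lvert p_k'\rvert\,dx\le\lVert h\rVert_\infty v$, valid for every $h\in C([0,1])$ and every $k$; it simultaneously shows that each $h\mapsto\int_0^1 h\cdot p_k'\,dx$ lies in $C^*([0,1])$ with norm $\le v$ and that the sequence $\bigl(\int_0^1 h\cdot p_k'\,dx\bigr)_k$ is uniformly bounded by $\lVert h\rVert_\infty v$.

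Step (ii) is the heart. First I would pass, exactly as in the proof of \prettyref{pro:jus2}, to a subsequence of $(p_k)_k$ along which $(p_k(q))_k$ converges for every $q\in\Rat\cap[0,1]$: since $\lvert p_k(x)-p_k(0)\rvert\le V(p_k)\le\int_0^1\lvert p_k'\rvert\,dx\le v$ and $\lVert p_k\rVert_1$ is bounded, all the values $p_k(q)$ lie in a fixed interval $[-v',v']$ with $v'$ depending only on $\lVert f\rVert_1$ and $v$, and Bolzano-Weierstra{\ss} on the compact space $[-v',v']^{\Nat}$ produces the subsequence. For a step function $h_n(x)=h(i/2^n)$ on $[i/2^n,(i+1)/2^n)$ one has $\int_0^1 h_n\cdot p_k'\,dx=\sum_i h(i/2^n)\bigl(p_k((i+1)/2^n)-p_k(i/2^n)\bigr)$, a finite sum whose terms converge as $k\to\infty$, so $\lim_k\int_0^1 h_n\cdot p_k'\,dx$ exists; a general $h\in C([0,1])$ is a uniform limit of such $h_n$, and by (i) one has $\bigl\lvert\int_0^1(h-h_n)p_k'\,dx\bigr\rvert\le\lVert h-h_n\rVert_\infty v$ uniformly in $k$, so an $\varepsilon/3$-argument gives that $T(h)=\lim_k\int_0^1 h\cdot p_k'\,dx$ exists for all $h$. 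This refinement is exactly what makes $T$ depend on the representing sequence and not merely on the $L_1$-class of $f$, cf.\ the remark following \eqref{eq:cdinf}.

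Steps (iii) and (iv) are then short: $T$ is linear as a pointwise limit of the linear functionals $h\mapsto\int_0^1 h\cdot p_k'\,dx$, and letting $k\to\infty$ in (i) gives $\lvert T(h)\rvert\le\lVert h\rVert_\infty v$, so $T\in C^*([0,1])$ with $\lVert T\rVert\le v$; the $L_1$-convergence $p_k\to f$ and the step-function approximation are elementary and the subsequence extraction is a Bolzano-Weierstra{\ss} instance, so the whole proof is in \ls{ACA_0}. I expect step (ii) to be the real obstacle, and the subsequence there is genuinely needed: for a wholly unrestricted code the limit in \eqref{eq:cdinf} can fail to exist — an oscillatory sequence of rational polynomials representing $0\in BV$ can be arranged so that $\int_0^1 p_k'\,dx=p_k(1)-p_k(0)$ alternates between $+1$ and $-1$ — so one must first refine $(p_k)_k$ so that its rational values stabilise, which is why \prettyref{pro:jus2} is invoked and why $T$ is sequence-dependent.
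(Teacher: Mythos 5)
Your proof is correct, but it is not quite the paper's own argument: the paper's proof of this proposition consists essentially of your step (i) alone, namely the estimate $\lvert \int_0^1 h\cdot p_k'\,dx\rvert \le \lVert h\rVert_\infty\, v$, from which it concludes that $T$ is continuous and lies in $C^*([0,1])$; the existence of the limit in \eqref{eq:cdinf} is left implicit there, and the subsequence converging pointwise at all rationals is invoked only later, in the converse direction (that each $T\in C^*([0,1])$ arises from some $f\in BV$), via the parenthetical referring to \prettyref{pro:jus2}. Your step (ii) transplants exactly that machinery (Bolzano-Weierstra{\ss} on $[-v',v']^\Nat$, dyadic step functions $h_n$, the uniform bound from (i), and an $\varepsilon/3$ argument) to the present direction, and your oscillation example is a genuine point: a code of $0\in BV$ can be arranged (e.g.\ from polynomial approximations of steeper and steeper ramps at $x=1$) with $\int_0^1\lvert p_k'\rvert\,dx\le 1$ and $\lVert p_k\rVert_1\to 0$ fast, yet $\int_0^1 p_k'\,dx=p_k(1)-p_k(0)=(-1)^k$, so for an unrestricted code the limit in \eqref{eq:cdinf} can already fail at $h\equiv 1$, and the bare norm estimate does not by itself yield the proposition as literally stated. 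What your route buys is a precise sense in which a code ``induces'' $T$---one first refines the code, within \ls{ACA_0}, to one converging at all rationals, and $T$ then depends on that refined code, consistent with the remark after \eqref{eq:cdinf} and with \prettyref{pro:intpart}---whereas the paper's route buys brevity by treating existence as clear; the extra Bolzano-Weierstra{\ss} application costs nothing, since the statement is in any case equivalent to \ls{ACA_0} by \prettyref{pro:bccduleq1eq}.
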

\begin{proposition}[\ls{ACA_0}]
  Each $T\in C^*([0,1])$ is of the form \eqref{eq:cdinf} for a suitable (code of an) $f\in BV$.
\end{proposition}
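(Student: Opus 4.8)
The plan is to take for $f$ (a code of) the right-continuous function $m$ that is already constructed in the discussion preceding \prettyref{pro:bccduleq1}, and then to check that the bounded functional which that code induces via \eqref{eq:cdinf} is exactly $T$. Concretely, I would first fix $v\in\Real$ with $\lVert T\rVert\le v$, extend $T$ continuously to the functions $\chi_{(y,1]}$ ($y\in[0,1]$) and their finite linear combinations by approximating them with the mollifier as in \eqref{eq:chiappr}, and set $m(y):=T(\chi_{(y,1]})$. As computed above, for $0\le t_1<\dots<t_n\le 1$ and suitable signs $e_i\in\{-1,1\}$ one has $\sum_{i=1}^{n-1}\abs{m(t_{i+1})-m(t_i)}=T\bigl(\sum_i e_i\chi_{(t_i,t_{i+1}]}\bigr)\le v$, so the variation of $m$ is bounded by $v$; and $m$ is right continuous because $\chi_{(y,1]}\to\chi_{(y_0,1]}$ in $L_1$ as $y\searrow y_0$ and $T$ is $\lVert\cdot\rVert_\infty$\nobreakdash-continuous. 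Hence $m\in BV$ by \prettyref{pro:jus1}. I then fix a code $(p_k)_k$, $v$ for $m$ and, exactly as in the proof of \prettyref{pro:jus2}, pass to a subsequence so that $(p_k(q))_k$ converges, with limit $m(q)$, for every $q\in\Rat\cap[0,1]$.

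Next I would verify that the bounded functional $T'$ that this code induces via \eqref{eq:cdinf}, namely $T'(h):=\lim_k\int_0^1 h\cdot p_k'\,dx$ (well defined by \prettyref{pro:bccduleq1}), coincides with $T$. Over \ls{ACA_0} every $h\in C([0,1])$ has a modulus of uniform continuity, so I fix such an $h$ and approximate it in $\lVert\cdot\rVert_\infty$ by the step functions $h_n(x):=h(i/2^n)$ for $x\in[i/2^n,(i+1)/2^n)$, with a modulus of convergence read off from that of $h$. Then $T(h)=\lim_n T(h_n)=\lim_n\sum_i h(i/2^n)\bigl(m((i+1)/2^n)-m(i/2^n)\bigr)$, and substituting $m(i/2^n)=\lim_k p_k(i/2^n)$ this equals $\lim_n\lim_k\sum_i h(i/2^n)\bigl(p_k((i+1)/2^n)-p_k(i/2^n)\bigr)$. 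Since the inner sum is $\int_0^1 h_n\cdot p_k'\,dx$ and, because $\lVert h_n\rVert_\infty\le\lVert h\rVert_\infty$ and $\int_0^1\abs{p_k'}\,dx\le v$, it converges as $n\to\infty$ to $\int_0^1 h\cdot p_k'\,dx$ uniformly in $k$, I may interchange the two limits to get $T(h)=\lim_k\lim_n\sum_i h(i/2^n)\bigl(p_k((i+1)/2^n)-p_k(i/2^n)\bigr)=\lim_k\int_0^1 h\cdot p_k'\,dx=T'(h)$. As this holds for every $h\in C([0,1])$ and both functionals are bounded, $T=T'$, so the chosen code of $m$ is the required $f$.

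Finally I would note that everything goes through in \ls{ACA_0}: \prettyref{pro:jus1}, \prettyref{pro:jus2} and \prettyref{pro:bccduleq1} are all proved over \ls{ACA_0}, the subsequence selection is a routine application of the Bolzano-Weierstra{\ss} principle (available in \ls{ACA_0}, as in the proof of \prettyref{pro:jus2}), and the remaining manipulations of codes are arithmetical. The step I expect to be the main obstacle is the interchange of the $n$- and $k$-limits: one has to be sure the uniformity in $k$ is genuine and certifiable in \ls{ACA_0}, and in particular that the subsequence of $(p_k)_k$ has been chosen so that $\lim_k p_k$ agrees with $m$ at \emph{every} dyadic point $i/2^n$ used in the step-function approximation, not merely almost everywhere.
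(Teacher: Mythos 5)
Your proposal is correct and is essentially the paper's own argument: the proposition is presented there as a consequence of the immediately preceding discussion, which likewise sets $m(y):=T(\chi_{(y,1]})$ via the mollifier extension, checks bounded variation and right continuity so that $m\in BV$ by \prettyref{pro:jus1}, chooses a code converging pointwise on the rationals as in \prettyref{pro:jus2}, and recovers $T(h)=\lim_k\int_0^1 h\cdot p_k'\,dx$ by the step-function approximation with the same interchange of limits. The extra details you supply (the inner sum being $\int_0^1 h_n\cdot p_k'\,dx$, uniformity in $k$ from the bound $v$ on $\int_0^1\abs{p_k'}\,dx$, and the caveat about pointwise agreement at the dyadic points) only make explicit what the paper's ``by uniform convergence in $n$'' and its parenthetical remark gloss over.
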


We just note that since $h$ can be approximated by infinitely often differentiable functions we may assume that it is differentiable. Then one can use integration by parts on \eqref{eq:cdinf} and obtain that
\[
T(h) = \lim_{k \to \infty} \left( h(1)p_k(1) - h(0)p_k(0) - \int_0^1 h' \cdot p_k \, dx \right)
.\]
Under the assumption that $h(0)=h(1)=0$---this is given for instance if $h\in C_0((0,1))$, that is the space of all uniformly continuous functions with compact support included in $(0,1)$---we get
\[
T(h) = - \lim_{k\to \infty} \int_0^1 h' \cdot p_k \, dx
.\]
This value can be computed from ${\lVert h' \rVert}_\infty$ since ${\lVert p_k - p_{k+1} \rVert}_1 \le 2^{-k}$. 
Thus one obtains the following.
\begin{proposition}[\ls{RCA_0}]\label{pro:intpart}
  The functional $T(h)$ as in \eqref{eq:cdinf} restricted to $h\in C_0((0,1))$ exists and does only depend on the $L_1$-equivalence class of $f$ (and not on its code).
\end{proposition}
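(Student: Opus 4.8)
The plan is to turn the computation sketched in the paragraph preceding the statement into an argument that already goes through in \ls{RCA_0}, in contrast to the \ls{ACA_0} needed for \prettyref{pro:bccduleq1}. Fix a code $(p_k)_k,v$ for $f$. The first step handles an infinitely differentiable $h$ whose support is a compact subset of $(0,1)$, so that $h(0)=h(1)=0$. For a polynomial against such a smooth function, integration by parts is available in \ls{RCA_0} via the fundamental theorem of calculus, and the boundary terms $h(1)p_k(1)-h(0)p_k(0)$ vanish, giving
\[
\int_0^1 h\cdot p_k'\,dx \;=\; -\int_0^1 h'\cdot p_k\,dx .
\]
Since ${\lVert p_k - p_{k+1} \rVert}_1 \le 2^{-k}$ we get $\bigl|\int_0^1 h'(p_k - p_{k+1})\,dx\bigr| \le {\lVert h' \rVert}_\infty\,2^{-k}$, so the sequence $\bigl(\int_0^1 h'\,p_k\,dx\bigr)_k$ is Cauchy with an explicit modulus and hence converges in \ls{RCA_0}; its limit is by definition the $L_1$-pairing $\int_0^1 h' f\,dx$, and two codes of the same $L_1$-class yield the same value since $\int_0^1 h'(p_k-\tilde p_k)\,dx\to 0$. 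Hence $T(h)$ exists and equals $-\int_0^1 h' f\,dx$, which depends only on the $L_1$-class of $f$.

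Next I would pass from such $h$ to an arbitrary $h\in C_0((0,1))$. Being uniformly continuous, $h$ has well-defined mollifications $h_n := h^{\epsilon_n}$ with $\epsilon_n\searrow 0$; choosing $\epsilon_n$ so small that the support of $h_n$ is still a compact subset of $(0,1)$, the properties of mollifications established above show that the $h_n$ are infinitely differentiable and converge to $h$ uniformly (with a rate, from a modulus of uniform continuity of $h$). The one new ingredient is the uniform estimate
\[
\Bigl|\int_0^1 (h-h_n)\,p_k'\,dx\Bigr| \;\le\; {\lVert h-h_n \rVert}_\infty \int_0^1 \lvert p_k'\rvert\,dx \;\le\; {\lVert h-h_n \rVert}_\infty\cdot v ,
\]
which uses exactly the bound $v$ on the variation. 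Feeding this together with the convergence of $\bigl(\int_0^1 h_n\,p_k'\,dx\bigr)_k$ from the first step into a routine $\varepsilon/3$ argument shows that $\bigl(\int_0^1 h\,p_k'\,dx\bigr)_k$ is Cauchy, so $T(h)$ exists; the same estimate gives $\lvert T(h)-T(h_n)\rvert \le {\lVert h-h_n \rVert}_\infty\cdot v$, so that $T(h)=\lim_n T(h_n)=-\lim_n\int_0^1 h_n' f\,dx$ --- an expression in which $f$ enters only through its $L_1$-equivalence class. As every rate involved --- the $2^{-k}$ from the code, the constant $v$, and the modulus of uniform continuity of $h$ --- is explicit, the whole argument is formalizable in \ls{RCA_0}.

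The main obstacle is precisely what forces the restriction to $C_0((0,1))$: the polynomials $p_k'$ coding the weak derivative of $f$ need not converge in $L_1$ --- only $\int_0^1\lvert p_k'\rvert\,dx\le v$ is controlled --- so one cannot pass to the limit inside $\int_0^1 h\,p_k'\,dx$ directly. Everything hinges on transferring the derivative from $p_k$ to $h$ by integration by parts, which is legitimate only because the boundary terms vanish, and then on exploiting the variation bound $v$ to absorb the uniform approximation error $h-h_n$ uniformly in $k$. Once these two points are in place, code-independence is automatic, since $\int_0^1 h' f\,dx$ visibly depends on $f$ only as an element of $L_1$.
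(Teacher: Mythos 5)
Your proof is correct and follows essentially the same route as the paper's own (terse) argument: reduce to smooth $h$ vanishing near the endpoints, integrate by parts so the derivative falls on $h$, and use ${\lVert p_k-p_{k+1}\rVert}_1\le 2^{-k}$ to get an explicit Cauchy rate and code-independence. The only difference is that you spell out the passage from smooth approximants back to a general $h\in C_0((0,1))$ via the uniform estimate $\bigl|\int_0^1 (h-h_n)p_k'\,dx\bigr|\le {\lVert h-h_n\rVert}_\infty\, v$, a step the paper compresses into ``we may assume that $h$ is differentiable''.
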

Or in other words, in this restricted case one does not need \ls{ACA_0} to get \prettyref{pro:bccduleq1}. The proposition below shows that \ls{ACA_0} is in general necessary.

\begin{proposition}[\ls{RCA_0}]\label{pro:bccduleq1eq}
  The statement of \prettyref{pro:bccduleq1} is equivalent to \ls{ACA_0}.

  In fact, it suffices to know for each $f\in BV$ the value $\lVert T \rVert$ or $V_{L_1}(f)$ for $T$ as in \eqref{eq:cdinf} to obtain \ls{ACA_0}.
\end{proposition}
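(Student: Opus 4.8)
The plan is to prove both directions. The direction \ls{ACA_0} $\to$ \prettyref{pro:bccduleq1} is already essentially done: the discussion preceding \prettyref{pro:bccduleq1} shows that from a code $(p_k)_k, v$ for $f\in BV$ the functional $T(h)=\lim_k \int_0^1 h\cdot p_k'\,dx$ is well-defined and bounded by $v$, and this construction only needs the existence of certain limits of Cauchy-like sequences of reals, which is available in \ls{ACA_0}. So the work is the reversal: from (the existence of $T$, or even just from knowing the single real number $\lVert T\rVert = V_{L_1}(f)$ for each $f\in BV$) we must derive \ls{ACA_0}. By the standard characterisation (Simpson, \cite{sS09}, Lemma~III.1.3 or Theorem~III.1.9) it suffices to show that the range of an arbitrary injection $g\colon\Nat\to\Nat$ exists as a set, equivalently that $\sum_n 2^{-n}\cdot[\exists m\, g(m)=n]$-type sums can be decided.

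First I would fix an injection $g\colon\Nat\to\Nat$ and build, in \ls{RCA_0}, a code for a function $f_g\in BV$ whose variation encodes the range of $g$. The idea is to place, for each $m$, a small smooth bump near the point $x_{g(m)}:= $ (some fixed rational coding the value $g(m)$, e.g.\ a point in a fixed summable family of disjoint subintervals $I_n\subseteq[0,1]$ with $\lvert I_n\rvert$ decreasing fast) of height $2^{-g(m)}$ and total variation exactly $2\cdot 2^{-g(m)}$, using the mollified characteristic-function construction from \eqref{eq:chiappr} so that no extra oscillation is introduced. Concretely, at stage $k$ one lets $p_k$ be the (rational polynomial approximation to the) sum of the bumps corresponding to $g(0),\dots,g(k)$; since the bumps live in disjoint intervals and have geometrically decreasing heights, the partial sums form an $L_1$-Cauchy sequence at a fixed rate, and $\int_0^1 \lvert p_k'\rvert\,dx \le \sum_{m} 2\cdot 2^{-m}\cdot 2 \le v$ for a fixed rational $v$ (the factor absorbs the polynomial-approximation slack, exactly as in the mollifier discussion before \prettyref{pro:just1con}). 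This gives a genuine $BV$-code computable from $g$.

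The key point is then that $V_{L_1}(f_g) = \sum_{n\in\operatorname{ran}(g)} 2\cdot 2^{-n}$: because the bumps are in disjoint intervals and do not oscillate, the supremum in \eqref{eq:var} over the right-continuous representative of \prettyref{pro:jus2} is attained in the limit by partitions that go up and down once across each occupied interval, summing exactly the bump heights over $n\in\operatorname{ran}(g)$; and for $n\notin\operatorname{ran}(g)$ nothing is contributed. Hence from the real number $V_{L_1}(f_g)$ (equivalently $\lVert T\rVert$, since $\lVert T\rVert = V_{L_1}(f)$ for $T$ as in \eqref{eq:cdinf}) one recovers, in \ls{RCA_0}, the set $\{\, n \mid n\in\operatorname{ran}(g)\,\}$: to decide whether $n\in\operatorname{ran}(g)$, compare a sufficiently good rational approximation of $V_{L_1}(f_g)$ against the finite partial sums $\sum_{j\le N, j\in\operatorname{ran}_N(g)} 2\cdot 2^{-j}$ where $\operatorname{ran}_N(g)=\{g(0),\dots,g(N)\}$, for $N$ large; the $2^{-n}$-gap in the geometric series makes this a decidable comparison once the tail is below $2^{-n-1}$. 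This yields $\operatorname{ran}(g)$ as a set, hence \ls{ACA_0}.

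The main obstacle I anticipate is the bookkeeping in the previous paragraph: one must choose the geometric weights and the interval family so that (i) the partial sums are honestly $L_1$-Cauchy at rate $2^{-k}$ with a \emph{single} rational variation bound $v$ valid for all $k$ (this forces the bump at $g(m)$ to be indexed by $m$, not by $g(m)$, for the rate, but by $g(m)$ for the height — both must hold simultaneously, which works because $g$ is injective so $g(m)\ge$ roughly $m$ infinitely often is \emph{not} guaranteed; instead one uses height $2^{-g(m)}$ and separately ensures $\lVert p_k-p_{k+1}\rVert_1\le 2^{-k}$ by making the $m$-th bump's $L_1$-mass at most $2^{-m}$, which is automatic from the height bound only if $g(m)\ge m$ — so one should instead take the bump mass to be $\min(2^{-m},2^{-g(m)})$-scaled, or simply reindex by $m$ for the shape and use height/area $4^{-m}$ while encoding $g(m)$ in \emph{which} interval is occupied rather than in the height). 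I would resolve this by separating the two roles cleanly: encode membership of $n$ in $\operatorname{ran}(g)$ by whether interval $I_n$ is ever occupied, and give the bump in $I_n$ (when it appears, at the stage $m$ with $g(m)=n$) height and variation $2^{-n}$; then the area is $\le \lvert I_n\rvert\le 2^{-n}$, the partial sums converge in $L_1$ at a rate computable from $g$ (possibly after a harmless speed-up of the coding sequence), and $V_{L_1}(f_g)=\sum_{n\in\operatorname{ran}(g)} 2^{-n}$, from which $\operatorname{ran}(g)$ is decidable as above. The rest — that this $f_g$ is right-continuous and that \prettyref{pro:jus2} applies to identify $V_{L_1}$ with the sup over a null-complement set — is routine given the tools already developed.
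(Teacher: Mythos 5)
Your reversal, as written, establishes only the second claim of the proposition. The first claim is that the statement of \prettyref{pro:bccduleq1} itself---the mere existence, for each code of an $f\in BV$, of the bounded functional $T\in C^*([0,1])$ of \eqref{eq:cdinf}---already implies \ls{ACA_0}. Your decoding uses the numerical value $V_{L_1}(f_g)$ (equivalently $\lVert T\rVert$), but a code for a bounded linear functional does not carry its norm: $\lVert T\rVert$ is a supremum over the unit ball, and extracting it from $T$ is in general exactly as hard as the comprehension you are trying to prove, so your parenthetical ``the existence of $T$, or even just knowing $\lVert T\rVert$'' has the implication backwards. The paper's proof avoids this by decoding from the single evaluation $T(\lambda x.\,1)$, which is available the moment $T$ exists, and by summing its per-$n$ gadgets with Cantor middle-third weights $2\cdot 3^{-n}$ so that this one real falls into well-separated intervals from which the whole set is read off. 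Your construction is repairable---since your bumps sit in disjoint intervals $I_n$, evaluate $T$ at test functions $h_n$ supported in $I_n$ ($+1$ on the rising, $-1$ on the falling part); each $T(h_n)$ is either $0$ or a value $\ge 2^{-n}$ known in advance, so $\{\,n \mid T(h_n)>2^{-n-1}\,\}$ is $\Delta^0_1$ and exists in \ls{RCA_0}---but this step is missing from your write-up.

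Two further points. First, your final version of the coding does not produce a legal code in the sense of \prettyref{def:bv}: the $L_1$-mass of the bump added at stage $m$ is tied to $n=g(m)$ (it is $\le \lvert I_n\rvert\le 2^{-n}$), so the successive differences are of size roughly $2^{-g(m)}$ and there is no $g$-computable Cauchy modulus; the ``harmless speed-up'' you invoke is exactly as uncomputable as $\operatorname{ran}(g)$. The repair is the one you yourself float earlier: keep the height (hence the variation contribution $2\cdot 2^{-n}$) tied to $n$, but shrink the width of the bump added at stage $m$ to at most $2^{-m}$, so the added mass is $\le 2^{-m}$; this stage-indexing is precisely how the paper's gadget $f_{n,k}$ earns its $2^{-k}$ rate (the mollifier radius $2^{-i'-1}$ is tied to the stage at which the $\Sigma^0_1$ event fires). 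Second, for the $V_{L_1}$-version of the ``in fact'' clause your tail search terminates only if $V_{L_1}(f_g)\le\lim_N V_N$, and the natural witness for that inequality (the pointwise limit of the partial sums) cannot be formed in \ls{RCA_0}; it is cleaner to argue with $\lVert T\rVert$ via $\lvert T(h)\rvert\le\liminf_k\int_0^1\lvert p_k'\rvert\,dx$. With these repairs your route---binary weights in disjoint intervals plus the standard tail comparison for $\sum_{n\in\operatorname{ran}(g)}2^{-n}$---does yield the second claim, by a genuinely different decoding than the paper's base-3 gap argument, but the first claim still requires decoding from evaluations of $T$ rather than from its norm.
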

\begin{proof}
  The right-to-left direction is \prettyref{pro:bccduleq1}. For the other direction consider the $\Pi^0_1$-statement (indexed by $n$)
  \[
  \Forall{i} \phi(n,i)
  .\]
  We show that we can build a set $X$ with $n\in X \IFF \Forall{i} \phi(n,i)$.

  Let
  \[
  f_{n,k}(x) := 
  \begin{cases}
    1- 2 \int_0^x \eta_{2^{-i'-1}}(y) \, dy & \text{if } \Exists{i\le k} \phi(n,i) \text{ and $i'$ is minimal with $\phi(n,i')$}, \\
    0 & \text{otherwise.}
  \end{cases}
  \]
  Since ${\left\lVert 1- 2\int_0^x \eta_{2^{-i'-1}}(y) \, dy\right\rVert}_1 < 2^{-i'-1}$ the sequence $(f_{n,k})_k$ forms a Cauchy-sequence with rate $2^{-k}$ for each $n$ and the variation is bounded by $1$. By \prettyref{lem:l1bv} the limit of $f_n$ of $(f_{n,k})_k$ is contained in $BV$. 

  Let $T_n$ be the functional corresponding to $f_n$ as in \eqref{eq:cdinf}. Since the function $f_n$ is the constant $0$ function if $\Forall{i} \phi(n,i)$ is true and otherwise $\lambda x . 1- 2 \int_0^x \eta_{2^{-i'-1}}(y) \, dy$ for an $i'$ we get that
  \begin{align*}
    T_n(\lambda x. 1) = 0  &\IFF \Forall{i} \phi(n,i) \\
    T_n(\lambda x. 1) = -1 &\IFF \NOT \Forall{i} \phi(n,i)
  \end{align*}
  Thus, one can read off the real number $T_n(\lambda x .1)$ whether $\Forall{i} \phi(n,i)$ is true. To obtain the second statement of the proposition for this particular $n$ note that since $T_n$ is non-increasing $\lVert T_n \rVert = V_{L_1}(f) = -T_n(\lambda x. 1)$.

  To obtain the full result we use a standard Cantor-middle third set construction to embed the Cantor space  into the unit interval. See for instance the proof of Theorem~IV.1.2 in \cite{sS09}.
  Thus, let $f(x):= \sum_{n=0}^\infty \frac{2 f_n(x)}{3^n}$ and let $T$ be the corresponding functional.
  Then $\Forall{i} \phi(0,i)$ if true if $-T(\lambda x. 1) \in [0,1/3]$ and false if it is in $[2/3,1]$. The statement for $n=1$ is true if $-T(\lambda x. 1) \in [0,1/9] \cup [2/3,7/9]$ and false if it is in $[2/9,1/3]\cup[8/9,1]$ and so on. From this one can easily construct the set $X$.
\end{proof}

\begin{remark}[Weak$^*$ topology]
  The space $C^*([0,1])$ is a dual space and, with this, one can define the weak$^*$ topology on it in the usual way. We say a sequence $(T_n)_n\subseteq C^*([0,1])$ converges to $T$ in the \emph{weak$^*$ topology} if{f}
  \[
  T_n(h) \xrightarrow{n\to \infty}  T(h) \quad\text{for all $h\in C([0,1])$}
  .
  \]
  Since $BV$ is isomorphic to $C^*([0,1])$ this induces a topology on $BV$. However, in most cases the following combination with the $L_1$\nobreakdash-topology is used.
  We say that a sequence of functions $(f_n)\subseteq BV$ converges in the \emph{weak$^*$ topology} to $f$ if{f} $f_n\xrightarrow{n\to\infty} f$ in $L_1$ and the functionals corresponding to $f_n$ converge in weak$^*$ topology of $C^*([0,1])$. See Definition~3.11 of \cite{AFP00}.

  One can show that for a sequence $(f_n)_n$ and $f$ in $BV$ that if 
  \begin{itemize}[label=$-$]
    \item $f_n \xrightarrow{n\to\infty} f$ in $L_1$ and 
    \item the variation of $(f_n)_n$ is uniformly bounded
  \end{itemize}
  then there exists a subsequence $(f_{g(n)})_n$ converging in the weak$^*$ topology to $f$.
  See Proposition~3.13 in \cite{AFP00}.\footnote{Note that the theorem there is stated in a misleading way. The statement should actually read ``\textbf{Proposition 3.13} Let $(u_h) \subset [BV(\omega)]^m$. Then there exists a subsequence $(u_{k(h)})$ converging to $u$ in $[BV(\omega)]^m$ if $(u_h)$ is bounded in $[BV(\omega)]^m$ and $u_h$ converges to $u$ in $[L_1(\omega)]^m$. If $(u_{h})$ converges to $u$ in $[BV(\omega)]^m$ then $u_h$ converges to $u$ in $[L_1(\omega)]^m$ and is bounded in $[BV(\omega)]^m$.''}

  This leads to the following.
  The representation of $BV$ as given in \prettyref{def:bv} is \emph{consistent} with the weak$^*$ topology in the sense that if a sequence of representations $(r_i)_i\subseteq \Nat^\Nat$ converges in the Baire space then the sequence of represented elements $f_{r_i}$ contains a weak$^*$-\hspace{0pt}converging subsequence. See also \prettyref{lem:l1bv}.
\end{remark}

\subsection{Other representations}

In \cite{vB05} Brattka proposes two different ways to represent elements of non-separable spaces. The first representation essentially codes an element $f$ of a space $X$ as a sequence of countable objects plus the norm ${\lVert f \rVert}_X$. Whereas the second representation just consists of the countable objects plus an upper bound $v$ on the norm. See also \cite{BS05}.

In the case of \prettyref{def:bv} the countable objects are rational polynomials.
The representation we defined in \prettyref{def:bv} is intermediate between those two representations proposed by Brattka because we have an upper bound of the norm of an element $f\in BV$, i.e.~${\Vert f \rVert}_{BV} \le v$, and thus the second representation is reducible to our representation. However, we have $f$ as full $L_1$ object including its norm, thus our representation is stronger.

Alternatively, we could have added the value of the variation instead of merely an upper bound to the representation of an element of $BV$. Since by \prettyref{pro:bccduleq1eq} going from an upper bound to right value of $V_{L_1}$ requires \ls{ACA_0}, this representation is too strong in general.

Other ways to represent functions of bounded variation are to take computable functions with a computable variation, see \cite{RZB02}, or as a computable function defined on a countable, dense subset of $[0,1]$, see \cite{HW07,JPW13}.
The first approach is too restricted since very few functions of bounded variation are computable. The second approach is orthogonal to ours since it defines points of functions, whereas we define the function in the $L_1$\nobreakdash-sense. This representation has been successfully used in algorithmic randomness, see \cite{BMNta,jRta}. However we believe that our approach is more natural since it fits nicely into the Sobolev spaces and easily generalizes to functions defined in $\Real^n$, which is not the case for the pointwise definition.

\section{Helly's selection theorem}\label{sec:hst}

\begin{theorem}[Helly's selection theorem, \lp{HST}, \ls{ACA_0}]\label{thm:helly}
  Let $(f_n)_n\subseteq BV$ be a sequence of functions with bounds for variations $v_n$. 
  If
  \begin{enumerate}[label=(\roman*)]
  \item\label{enum:hst1} ${\left\lVert f_n \right\rVert}_1 \le u$ for a $u\in \Rat$,
  \item\label{enum:hst2} $v_n \le v$ for a $v\in \Rat$,
  \end{enumerate}
  then there exists an $f\in BV$ and a subsequence $f_{g(n)}$ such that $f_{g(n)} \xrightarrow{n\to\infty} f$ in $L_1$  and the variation of $f$ is bounded by $v$.
\end{theorem}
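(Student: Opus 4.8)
The plan is to reduce to right-continuous representatives, apply the Jordan decomposition to write each $f_n$ as a difference of monotone functions, extract a subsequence converging pointwise on $\Rat\cap[0,1]$ by Bolzano--Weierstra{\ss} for a compact product space, and finally upgrade this to $L_1$-convergence via dominated convergence. First I would use \prettyref{pro:jus2} to replace each $f_n$ by a right-continuous $\tilde f_n$ with $\tilde f_n = f_n$ almost everywhere; since $\tilde f_n$ is a pointwise limit (on a set of measure one containing $\Rat\cap[0,1]$) of the rational polynomials $p_{n,k}$ coding $f_n$, and $V(p_{n,k})\le\int_0^1|p_{n,k}'|\,dx\le v_n$, lower semicontinuity of $V$ under such limits gives $V(\tilde f_n)\le v_n\le v$. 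Moreover $\int_0^1|\tilde f_n|\,dx=\|f_n\|_1\le u$ and $|\tilde f_n(0)-\int_0^1\tilde f_n\,dx|\le V(\tilde f_n)\le v$, so $|\tilde f_n(0)|\le u+v=:M$.

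Next I would take the classical Jordan decomposition $\tilde f_n = a_n - b_n + \tilde f_n(0)$, where $a_n,b_n$ are the right-continuous non-decreasing positive- and negative-variation functions of $\tilde f_n$; thus $0\le a_n,b_n\le v$ pointwise, $a_n(0)=b_n(0)=0$, and $a_n(1)+b_n(1)=V(\tilde f_n)\le v_n\le v$. These functions are arithmetic in $\tilde f_n$, so everything so far goes through in \ls{ACA_0}. Enumerating $\Rat\cap[0,1]$ as $(q_i)_i$, the sequence $n\mapsto\bigl((a_n(q_i))_i,(b_n(q_i))_i,\tilde f_n(0)\bigr)$ lives in the compact metric space $[0,v]^{\Nat}\times[0,v]^{\Nat}\times[-M,M]$, so by the Bolzano--Weierstra{\ss} principle --- available in \ls{ACA_0}, exactly as used in the proof of \prettyref{pro:jus2} (cf.\ Lemma~III.2.5 and Theorem~III.2.7 of \cite{sS09}) --- there is a strictly increasing $g$ with $a_{g(n)}(q)\to a(q)$ and $b_{g(n)}(q)\to b(q)$ for all $q\in\Rat\cap[0,1]$, and $\tilde f_{g(n)}(0)\to c$. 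The limits $a,b$ are non-decreasing on the rationals and extend, using boundedness, to right-continuous non-decreasing functions on $[0,1]$ with $a(0)=b(0)=0$ and $a(1)+b(1)=\lim_n\bigl(a_{g(n)}(1)+b_{g(n)}(1)\bigr)\le v$.

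Put $\tilde f := a - b + c$. It is right-continuous and, by subadditivity of $V$ and monotonicity, $V(\tilde f)\le V(a)+V(b)=a(1)+b(1)\le v$. For the $L_1$-convergence I would use that for a uniformly bounded non-decreasing sequence converging pointwise on a dense set to the monotone limit $a$, one has $a_{g(n)}(x)\to a(x)$ at every continuity point $x$ of $a$; since $a$ is monotone it has at most countably many jumps (for each $m$ at most $mv$ of size $\ge 1/m$), so the exceptional set is null, whence $a_{g(n)}\to a$ and $b_{g(n)}\to b$ almost everywhere and, by dominated convergence with bound $v$ (available in \ls{ACA_0}, cf.\ \cite{ADR12} as used in \prettyref{pro:jus1}), also in $L_1$. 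Hence $\|\tilde f_{g(n)}-\tilde f\|_1\le\|a_{g(n)}-a\|_1+\|b_{g(n)}-b\|_1+|\tilde f_{g(n)}(0)-c|\to 0$, and since $f_{g(n)}=\tilde f_{g(n)}$ almost everywhere, $f_{g(n)}\to\tilde f$ in $L_1$. Finally $\tilde f$ is right-continuous with $V(\tilde f)\le v<\infty$, so by \prettyref{pro:jus1} its $L_1$-class $f$ lies in $BV$, and the approximating step functions in that proof have variation $\le V(\tilde f)\le v$, so \prettyref{lem:l1bv} lets us record $v\in\Rat$ as the bound on the variation of the resulting code; then $f$ and $g$ are as required.

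I expect the main obstacle to be twofold. First, one must bound the variation of the limit by $v$ and not by some larger quantity; this is precisely why the argument must go through a Jordan decomposition whose two non-decreasing parts have variations summing exactly to the total variation, after which one passes to the limit of those variations. Secondly, one must convert pointwise convergence on the rationals into $L_1$-convergence, which genuinely uses the monotonicity of $a_n,b_n$ (not merely bounded variation of $\tilde f_n$) together with a careful but routine check that the supporting facts --- countable sets are null, dominated convergence, arithmetic definability of $V$ and of the positive/negative-variation functions, and Bolzano--Weierstra{\ss} for $[0,v]^{\Nat}$ --- are all available in \ls{ACA_0}.
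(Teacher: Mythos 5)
Your argument is correct in substance, but it is a genuinely different proof from the one in the paper: you follow the classical route for Helly's theorem---pass to right-continuous representatives (\prettyref{pro:jus2}), split them by the Jordan decomposition into monotone parts, select by Bolzano--Weierstra{\ss} on a compact product, and upgrade pointwise-a.e.\ convergence of the monotone parts to $L_1$-convergence by dominated convergence---whereas the paper never decomposes at all: it mollifies each $f_n$, observes that for each $\epsilon=2^{-j}$ the mollified sequence is uniformly bounded and equicontinuous with ${\lVert f_n^\epsilon-f_n\rVert}_1\le 2\epsilon v$, and applies a single diagonalized Arzel\`a--Ascoli step (\prettyref{pro:aadiag}, i.e.\ one instance of $\mathsf{BWT}_{[0,1]^\Nat}$) to get a subsequence that is $L_1$-Cauchy \emph{with an explicit rate}, after which \prettyref{lem:l1bv} immediately yields $f\in BV$ with variation bound $v$. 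Two consequences of this difference are worth noting. First, your claim that one ``must'' go through the Jordan decomposition to keep the bound $v$ is not accurate: the paper gets the bound for free from \prettyref{lem:l1bv} because the extracted subsequence converges at a fixed rate and each term carries the bound $v$. Second, your route needs more uniformization than you acknowledge: \prettyref{pro:jus2} must be applied to all $f_n$ \emph{simultaneously} (its proof involves a Bolzano--Weierstra{\ss} extraction per $n$), and the positive/negative variation functions are arithmetically defined suprema; all of this is indeed available in \ls{ACA_0} (arithmetical comprehension uniformizes the $\limsup$-extractions and the sups), so there is no gap for \prettyref{thm:helly} as stated, but your subsequence converges without a modulus and the whole construction uses parallelized applications of \lp{BW}. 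This is exactly what the paper's proof is engineered to avoid: its explicit rate and single $\mathsf{BWT}_{[0,1]^\Nat}$ instance are what drive the later instance-wise equivalence of \lp{HST} with \lp{BW} and the Weihrauch bound $\mathsf{HST}\le_{\mathrm{W}}\mathrm{lim}\times\mathsf{BWT}_{[0,1]^\Nat}$, refinements your proof would not directly deliver.
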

The statement of this theorem will be abbreviated by \lp{HST}.

Originally Helly's selection theorem was formulated for usual functions and not $L_1$\nobreakdash-\hspace{0pt}functions. There usually \ref{enum:hst1} is replaced by the statement that $\abs{f_n(x)} \le u'$ for an $x\in[0,1]$ and a bound $u'$. Note that this implies \ref{enum:hst1} since by \ref{enum:hst2} with the bound $u'$ we have ${\lVert f_n \rVert}_\infty \le u'+v$ and with this also ${\lVert f_n \rVert}_1 \le u'+v =: u$.

For the proof of \lp{HST} we will need the following lemma.
\begin{lemma}[\ls{RCA_0}]
  Let $f\in BV$ and let $v$ be the  bound of variation of $f$. The system $\ls{RCA_0}$ proves that for each $\epsilon > 0$ that
  \begin{enumerate}[label=(\roman*)]
  \item\label{enum:moll2:1} $f^\epsilon \in L_1 $ exists, and that
  \item\label{enum:moll2:2} ${\lVert f^\epsilon - f \rVert}_1 \le 2 \epsilon  v$.
  \end{enumerate}
\end{lemma}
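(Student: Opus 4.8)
The plan is to reduce everything to the mollification estimates already established for smooth/continuous functions. First I would note that part \ref{enum:moll2:1} is essentially immediate: a code for $f$ is a sequence $(p_k)_k \subseteq \Rat[x]$ with ${\lVert p_k - p_{k+1}\rVert}_1 \le 2^{-k}$, and mollification is a linear, $L_1$-continuous operation, so the sequence $(p_k^\epsilon)_k$ satisfies ${\lVert p_k^\epsilon - p_{k+1}^\epsilon\rVert}_1 \le {\lVert p_k - p_{k+1}\rVert}_1 \le 2^{-k}$ (using $\int \eta_\epsilon = 1$ and Fubini on the convolution). Hence $(p_k^\epsilon)_k$ is a Cauchy sequence coding an $L_1$-element which we call $f^\epsilon$; this is provable in \ls{RCA_0} since it is just an arithmetical manipulation of the given code. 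One should also remark that $f^\epsilon$ depends only on the $L_1$-class of $f$, not on the code, which again follows from $L_1$-continuity of convolution.

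For part \ref{enum:moll2:2} the key step is the pointwise-in-$k$ estimate ${\lVert p_k^\epsilon - p_k\rVert}_1 \le 2\epsilon V(p_k)$, which then passes to the limit since both $p_k^\epsilon \to f^\epsilon$ and $p_k \to f$ in $L_1$, and $V(p_k) \le v$ by the lemma preceding \prettyref{pro:jus2}. To prove the pointwise estimate I would write, for a fixed smooth $p := p_k$,
\[
p^\epsilon(x) - p(x) = \int_{-\epsilon}^{\epsilon} \eta_\epsilon(y)\bigl(p(x-y) - p(x)\bigr)\, dy
= \int_{-\epsilon}^{\epsilon} \eta_\epsilon(y) \int_{x-y}^{x} p'(t)\, dt\, dy,
\]
so that
\[
{\lVert p^\epsilon - p\rVert}_1 \le \int_{-\epsilon}^{\epsilon} \eta_\epsilon(y) \int_0^1 \int_{x-y}^{x} \abs{p'(t)}\, dt\, dx\, dy.
\]
For $\abs{y} \le \epsilon$ the inner double integral $\int_0^1 \bigl|\int_{x-y}^x \abs{p'(t)}\,dt\bigr|\, dx$ is bounded by $\abs{y}\cdot \int \abs{p'(t)}\,dt \le \epsilon V(p)$ after interchanging the order of integration (each value $p'(t)$ is integrated over an $x$-interval of length at most $\abs{y}$), using the boundary-extension convention $p(x)=p(-x)$, $p(x)=p(1-x)$ so that $p'$ on $[-1,2]$ has total integral of its absolute value at most $3V(p)$ — actually one gets a cleaner bound by keeping $t$ in $[0,1]$ and absorbing the reflected pieces, so I would be careful here and perhaps accept the constant as stated, $2\epsilon v$, which leaves comfortable slack. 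Then $\int_{-\epsilon}^\epsilon \eta_\epsilon(y)\cdot \epsilon V(p)\, dy = \epsilon V(p) \le \epsilon v$, and the factor $2$ covers the two-sided extension.

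The main obstacle I anticipate is purely bookkeeping: handling the reflection-extension of $f$ to $[-1,2]$ so that the convolution makes sense on $[0,1]$, and checking that the variation of the extended function on the relevant enlarged interval is still controlled by a small multiple of $V(f)$ (it is at most $3V(f)$ if one reflects once on each side, but near the endpoints only a length-$\epsilon$ neighbourhood is ever used, so the effective contribution is $O(\epsilon v)$). Once the constant is pinned down to fit within $2\epsilon v$ — which is the reason the statement is phrased with this slightly generous constant rather than $\epsilon v$ — the rest is a routine application of Fubini, the fundamental theorem of calculus for polynomials, and the already-proved bound $V(p_k)\le v$, all formalizable in \ls{RCA_0}.
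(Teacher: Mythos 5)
Your proposal follows essentially the same route as the paper's proof: mollify the coding polynomials, use Fubini and the fundamental theorem of calculus to obtain ${\lVert p_k^\epsilon - p_k\rVert}_1 \le 2\epsilon v$, and pass to the $L_1$-limit. Two small corrections: because of the reflection extension, mollification is only a contraction up to a factor $2$, i.e.\ ${\lVert p_k^\epsilon - p_{k+1}^\epsilon\rVert}_1 \le 2\,{\lVert p_k - p_{k+1}\rVert}_1$ (the paper notes exactly this factor), which merely forces a re-indexing of the code for $f^\epsilon$ in part \ref{enum:moll2:1}; and your detour through $V(p_k)$ is both unnecessary and cited in the wrong direction, since the lemma before \prettyref{pro:jus2} gives $V(p_k)\le \int_0^1 \lvert p_k'(x)\rvert\,dx$ rather than the converse you invoke, while the bound you actually need, $\int_0^1 \lvert p_k'(x)\rvert\,dx \le v$, is already part of the code in \prettyref{def:bv} and is what the paper uses directly.
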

\proof
  Let $(p_k)_k$ be the sequence of rational polynomials coding $f$. We have 
  \begin{align*}
    {\left\lVert f^\epsilon - {(p_k)}^\epsilon \right\rVert}_1 & 
    = \int_0^1 \int_{-\epsilon}^{\epsilon} \eta_\epsilon(y) \left(f(x-y) - p_k(x-y)\right) \, dy \, dx \\
    & = \int_{-\epsilon}^{\epsilon} \eta_\epsilon(y) \int_0^1  \left(f(x-y) - p_k(x-y)\right) dx \, dy \qquad\text{by Fubini}\\
    & \le 2 {\lVert f - p_k \rVert}_1 \int _{-\epsilon}^{\epsilon} \eta_\epsilon(y) = 2 {\lVert f - p_k \rVert}_1 .
  \end{align*}
  (The $2$ in the above inequality comes from the possible reflection of $f$ in the mollification as we defined it.)
  It follows that (a $2^{-k+1}$-good approximation with rational polynomials of) $(p_{k+2})^\epsilon$ is a code for $f^\epsilon\in L_1$. 

  For \ref{enum:moll2:2} we have for any $k$
  \begin{alignat*}{3}
    {\lVert f^\epsilon - f \rVert}_1 & \le {\lVert (p_{k})^\epsilon - p_k \rVert}_1 + 2^{-k+2} 
    \shortintertext{since ${\lVert f-p_k\rVert}_1 < 2^{-k}$ and ${\lVert f^\epsilon-p_k^\epsilon\rVert}_1 < 2^{-k+1}$ by the above estimate. Further, }
    {\lVert (p_{k})^\epsilon - p_k \rVert}_1 
    & = \int_0^1 \int_{-\epsilon}^{\epsilon} \eta_\epsilon(y) \cdot p_k(x-y) \, dy - p_k(x) \, dx \\
    & = \int_0^1 \int_{-\epsilon}^{\epsilon} \eta_\epsilon(y) \cdot \left(p_k(x-y) - p_k(x)\right) \, dy \, dx & \text{since $\int \eta_\epsilon = 1$}\\
    & = \int_0^1 \int_{-1}^{1} \eta(y) \cdot \left(p_k(x-\epsilon y) - p_k(x)\right) \, dy \, dx & \text{substituting $y\mapsto \epsilon y$}
    \shortintertext{since $\abs{p_k(x-\epsilon y) - p_k(x)} = \abs{\int_{0}^{y} \frac{d}{dy} p_k(x-\epsilon y)\, dy} = \abs{\epsilon \int_{0}^{y}  p_k'(x-\epsilon y) \,dy} \le 2 \epsilon {\lVert p_k' \rVert}_1$ for $y\in [-1,1]$}
    & \le \int_0^1 \int_{-1}^{1} \eta(y) \cdot 2 \epsilon {\lVert p_k'
      \rVert}_1 \, dy \, dx = 2 \epsilon {\lVert p_k' \rVert}_1
    \;\smash[b]{\le 2 \epsilon v.}\rlap{\hbox to 119 pt{\hfill\qEd}}\medskip
    \end{alignat*}

\begin{proof}[Proof of \prettyref{thm:helly}]
  For the mollifications $f_n^\epsilon$ of $f_n$ we have by definition \eqref{eq:defmollification} that
  \begin{alignat*}{4}
  {\left\lVert f_n^\epsilon \right\rVert}_\infty & \le  {\lVert f_n \rVert}_1\, \lVert {\eta_\epsilon \rVert}_\infty &&\le \frac{u}{\epsilon} ,
  \shortintertext{and by \eqref{eq:moldif2} that}
  {\left\lVert \left(f_n^\epsilon\right)' \right\rVert}_\infty & \le {\lVert f_n \rVert}_1\, \lVert {\eta_\epsilon' \rVert}_\infty && \le u \, \lVert {\eta_\epsilon' \rVert}_\infty .
  \end{alignat*}
  Thus, for each fixed $\epsilon$ the sequence ${(f_n^\epsilon)}_n$ is uniformly bounded and---by the uniform bound on the derivative---equicontinuous.
  We instantiate $\epsilon$ with $2^{-i}$ and obtain a sequence of sequences of bounded, equicontinuous functions ${(f_n^{(2^{-i})})}_{n,i}$. By the previous lemma this sequence is contained in $L_1$ and converges as $i\to \infty$ to $f_n$.

  By \prettyref{pro:aadiag} below, a variant of the Arzelà-Ascoli theorem, there exists a subsequence $g(n)$, such that for each $k$
  \[
  \Forall{j\le k} \Forall{n,n'\ge k} {\left\lVert f_{g(n)}^{(2^{-j})} - f_{g(n')}^{(2^{-j})}\right\lVert}_\infty \le 2^{-k} .
  \]
  Now for $n,n'\ge k$
  \begin{align*}
    {\left\lVert f_{g(n)} - f_{g(n')} \right\rVert}_1 & \le {\left\lVert f_{g(n)}^{(2^{-k})} - f_{g(n')}^{(2^{-k})} \right\rVert}_1 + {\left\lVert f_{g(n)} - f_{g(n)}^{(2^{-k})} \right\rVert}_1 + {\left\lVert f_{g(n')} - f_{g(n')}^{(2^{-k})} \right\rVert}_1 \\ 
    & \le 2^{-k} + 2\cdot 2 \cdot 2^{-k} v
    .
  \end{align*}
  Thus, $f_{g(n)}$ forms a $L_1$\nobreakdash-converging sequence with rate of convergence $2^{-k} + 2^{-k+2} v$. Thus $\lim f_{g(n)} = f \in L_1$. By \prettyref{lem:l1bv} we have that $f\in BV$. 
\end{proof}
The previous proof was inspired by \cite[Theorem~3.23]{AFP00}.

\begin{proposition}[Diagonalized Arzelà-Ascoli, \ls{ACA_0}]\label{pro:aadiag}
  Let $f_{n,j}\colon [0,1] \longto \Real$ be a sequence of sequences of functions. If for each $j$
  \begin{enumerate}
  \item the sequence $(f_{n,j})_n$ is bounded by $u_j\in \Rat$, and
  \item $(f_{n,j})_n$ is uniformly equicontinuous, i.e., there exists a modulus of uniform equicontinuity $\phi_j(l)$, such that 
    $\Forall{l}\Forall{n}\Forall{x,y\in[0,1]} \big(\abs{x-y} < 2^{-\phi_j(l)}\linebreak[1] \IMPL \abs{f_{n,j}(x)-f_{n,j}(y)} < 2^{-l}\big)$,
  \end{enumerate}
  then there exists a subsequence $g(n)$ such that for all $j$ the sequence $f_{g(n),j}$ converges uniformly in the sense that
  \begin{equation}\label{eq:conv}
  \Forall{k} \Forall{j\le k} \Forall{n,n'\ge k} {\left\lVert f_{g(n),j} - f_{g(n'),j} \right\lVert}_\infty < 2^{-k}
  .\end{equation}
\end{proposition}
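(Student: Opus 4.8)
The plan is to build the subsequence $g$ by a standard diagonalization, using the ordinary Arzelà-Ascoli theorem (which, by the results of \cite{aK14a} and the discussion in the introduction, is available over \ls{ACA_0}, being equivalent to an instance of Bolzano-Weierstra{\ss}) to extract, level by level in $j$, a nested family of subsequences on which the $j$-th sequence converges uniformly, and then to take the diagonal. The subtlety — and the reason the proposition is phrased the way it is — is that the naive diagonal sequence need not satisfy the \emph{uniform} Cauchy estimate \eqref{eq:conv}, in which for each threshold $k$ \emph{all} of the first $k$ sequences must be within $2^{-k}$ simultaneously; so the indices of the diagonal must be chosen with a little care, thinning each successive subsequence enough that it is not merely convergent but convergent with a controlled rate.

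First I would set up the nested subsequences. Write $g_0 = \mathrm{id}$. Given the strictly increasing $g_{j-1}\colon\Nat\to\Nat$, apply Arzelà-Ascoli to the bounded, uniformly equicontinuous sequence $\big(f_{g_{j-1}(n),j}\big)_n$ to obtain a strictly increasing $h_j\colon\Nat\to\Nat$ such that $\big(f_{g_{j-1}(h_j(n)),j}\big)_n$ converges uniformly; passing to a further subsequence of $h_j$ if necessary (choosing $h_j(n)$ large enough at each step using the known modulus of uniform equicontinuity $\phi_j$ and the bound $u_j$, exactly as in the usual effective Arzelà-Ascoli argument) we may assume it is uniformly Cauchy \emph{with rate}: $\big\lVert f_{g_{j-1}(h_j(n))\,,\,j} - f_{g_{j-1}(h_j(n'))\,,\,j}\big\rVert_\infty < 2^{-n}$ for all $n'\ge n$. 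Set $g_j := g_{j-1}\circ h_j$. Then each $g_j$ refines $g_{j-1}$, and along $g_j$ the $j$-th (and, by refinement, every earlier) sequence is uniformly Cauchy with rate $2^{-n}$. All of this is arithmetical in the given data — each $h_j$ is obtained by an instance of Bolzano-Weierstra{\ss}, and the sequence $(g_j)_j$ can be formed by $\Sigma^0_1$ recursion using a fixed $\mathrm{ACA}_0$-instance — so the construction goes through in \ls{ACA_0}.

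Next I would take the diagonal $g(n) := g_n(n)$ and verify \eqref{eq:conv}. Because $g$ is eventually a subsequence of each $g_j$ (for $n\ge j$, the value $g(n)=g_n(n)$ lies in the range of $g_j$, as $g_n$ refines $g_j$), I can compare $f_{g(n),j}$ for $n,n'\ge k\ge j$ by relating both to the uniformly-Cauchy-with-rate sequence along $g_j$: choosing the thinning in the previous paragraph so that the rate is fast enough relative to $k$ (e.g. demanding rate $2^{-n}$ suffices, since the diagonal entry $g(n)$ for $n\ge k$ corresponds to an index $\ge k$ in the $g_k$-enumeration, hence $\ge k$ in each coarser $g_j$-enumeration for $j\le k$), one gets $\lVert f_{g(n),j}-f_{g(n'),j}\rVert_\infty < 2^{-k}$ for all $j\le k$ and $n,n'\ge k$. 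Strict monotonicity of $g$ is immediate from strict monotonicity of each $g_j$.

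The main obstacle, and the only place that needs real attention rather than bookkeeping, is arranging at each stage that the extracted subsequence is uniformly Cauchy \emph{with an explicit rate of convergence}, not just uniformly convergent — because $\eqref{eq:conv}$ asks for the quantitatively uniform statement with the single bound $2^{-k}$ controlling the first $k$ sequences at once, and a plain diagonal of merely-convergent subsequences does not deliver this. This is exactly the content of the effective form of Arzelà-Ascoli: from the modulus of uniform equicontinuity $\phi_j$ and the bound $u_j$ one covers $[0,1]$ by finitely many intervals of length $<2^{-\phi_j(l)}$ and the function values by finitely many intervals of length $<2^{-l}$, so that a suitable finite combinatorial search (an instance of Bolzano-Weierstra{\ss}, equivalently $\mathrm{WKL}$ for a $\Sigma^0_1$-tree) produces a subsequence and a rate simultaneously. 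Granting that effective version — which is available in the cited reverse-mathematics setting — the rest of the proposition is the diagonal argument sketched above.
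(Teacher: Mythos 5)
Your overall strategy---extract, level by level in $j$, nested subsequences along which the $j$-th sequence is uniformly Cauchy \emph{with an explicit rate}, then diagonalize---is classically fine, and you correctly identify that a rate (obtained from $\phi_j$ and $u_j$) is needed to get the uniform estimate \eqref{eq:conv}. The genuine gap is the claim that this construction ``goes through in \ls{ACA_0}'' because ``the sequence $(g_j)_j$ can be formed by $\Sigma^0_1$ recursion using a fixed $\mathrm{ACA}_0$-instance.'' Each stage of your recursion is not a computable step: producing $h_j$ is a subsequence selection obtained by an application of Bolzano--Weierstra{\ss} (equivalently, arithmetical comprehension) \emph{relative to the previously constructed} $g_{j-1}$, so the definitional complexity of $g_j$ grows with $j$ and the whole sequence $(g_j)_j$ is not given by any single arithmetical formula in the original data. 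Iterating comprehension-based selections along $\Nat$ in this way is exactly the pattern that in general outruns \ls{ACA_0} (it is the $\mathrm{ACA}_0'$/iterated-jump pattern), and it is precisely the ``seemingly iterated application of the Arzel\`a--Ascoli theorem'' that the paper's introduction flags as the obstacle to be avoided. So as written your proof does not establish the proposition over \ls{ACA_0}; it would need either a uniformity argument showing all stages are arithmetical in one fixed set, or a restructuring.

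The paper restructures: it normalizes the functions into $[0,1]$, recalls from \cite{aK14a} that for an equicontinuous sequence uniform convergence (with a computable rate, via $\phi_j$) is equivalent to pointwise convergence on $\Rat\cap[0,1]$, and then \emph{parallelizes} by interleaving all levels into the single sequence $\big(\big(f_{n,j}(q(i))\big)_{\langle i,j\rangle}\big)_n\subseteq[0,1]^\Nat$ and applying Bolzano--Weierstra{\ss} for $[0,1]^\Nat$ exactly once; this single instance is instance-wise equivalent to \lp{BW} for $[0,1]$ and hence available in \ls{ACA_0}, and one then converts the pointwise rates into uniform rates using $\phi_j$ and thins out to get \eqref{eq:conv}. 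This single-application structure is not just a convenience: it is what later yields the instance-wise equivalence of \lp{HST} with one instance of \lp{BW} and the Weihrauch classification $\mathsf{HST}\equiv_{\mathrm{W}}\mathsf{BWT}_\Real$, whereas your iterated extraction would at best reduce the problem to compositional (iterated) uses of Bolzano--Weierstra{\ss}, which is strictly stronger in that calibration.
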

\begin{proof}
  By replacing $f_{n,k}$ with $\frac{ f_{n,k}}{2 u_n} + \frac{1}{2}$ we may assume that the image of $f_{n,k}$ is contained in the unit interval $[0,1]$.
 
  In \cite[Lemma~3, Corollary~4]{aK14a} we showed that an equicontinuous sequence of functions $h_n\colon [0,1]\longto[0,1]$ converges uniformly if{f} $h_n$  converges pointwise on $\Rat \cap [0,1]$, i.e., for an enumeration $q$ of $\Rat \cap [0,1]$ the sequence ${\big({(h_n(q(i)))}_i\big)}_n \subseteq [0,1]^\Nat$ converges in $[0,1]^\Nat$  with the product norm $d((x_i),(y_i)) = \sum_i 2^{-i} d(x_i,y_i)$.
  Moreover, from a rate of convergence and the modulus of uniform equicontinuity one can calculate a rate of convergence of $h_n$ in ${\lVert \cdot \rVert}_\infty$. 

  With this the Arzelà-Ascoli theorem follows directly from an application of the Bolzano-Weierstra{\ss} principle for the space $[0,1]^\Nat$. For details see \cite{aK14a}.

  We can parallelize this process for $f_{n,j}$ by applying the Bolzano-Weierstra{\ss} principle to the sequence 
  ${\big(\big({f_{n,j}(q(i))}\big)_{\langle i,j \rangle}\big)}_n \subseteq [0,1]^\Nat$. With this we obtain a subsequence $g(n)$ such that for each $j$ we have  $\big({f_{g(n),j}(q(i))}\big)_i\in{[0,1]}^\Nat$ converges at a given rate for $n\to \infty$.  By the above considerations we get that $f_{g(n),j}\in C([0,1])$ converges uniformly at a given rate (depending in $\phi_j$). By thinning out the sequence $g(n)$ we get \eqref{eq:conv}.

  This proposition is provable in \ls{ACA_0} since Bolzano-Weierstra{\ss} principle for the space $[0,1]^\Nat$ is instance-wise equivalent to the Bolzano-Weierstra{\ss} principle for $[0,1]$ which is provable in \ls{ACA_0}, see e.g.~\cite{aK14a,sS09}.
\end{proof}

We now come the reversal.
\begin{theorem}\label{thm:hstaca}
  Over \ls{RCA_0}, \lp{HST} is equivalent to \lp{ACA_0}.
\end{theorem}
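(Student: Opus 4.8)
The plan is to prove both implications. The direction $\ls{ACA_0} \to \lp{HST}$ is already done: it is exactly \prettyref{thm:helly}, whose proof is carried out in \ls{ACA_0}. So the real content is the reversal $\ls{RCA_0} + \lp{HST} \vdash \ls{ACA_0}$. By the standard characterisation of \ls{ACA_0} (see \cite{sS09}), it suffices to show, working in $\ls{RCA_0} + \lp{HST}$, that the range of an arbitrary injection $e\colon \Nat \longto \Nat$ exists as a set. The idea is to turn the (non-uniform) enumeration $e$ into a bounded, uniformly-bounded-variation sequence in $BV$ whose $L_1$-limit stores $\mathrm{range}(e)$ in a way that can be recovered from an $L_1$-code.

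Fix $e$ and set $A := \mathrm{range}(e)$. First I would fix pairwise well-separated rational points in $(0,1)$, say $z_m := 1 - 2^{-m-1}$, together with tiny rational intervals $[a_m,b_m] \ni z_m$, and let $g_m$ be the integral of a mollifier difference $x \mapsto \int_0^x \eta_{\epsilon_m}(y - a_m) - \eta_{\epsilon_m}(y - b_m)\,dy$ with $\epsilon_m$ so small that the supports $U_m$ of the $g_m$ are pairwise disjoint and ${\lVert g_m \rVert}_1 \ge r_m$ for a fixed positive rational $r_m$ (e.g.\ $r_m = (b_m - a_m)/2$). By the discussion of mollifiers in \prettyref{sec:int}, each $g_m$ is a point of $BV$ with an explicit rational-polynomial code and bound on variation $2$, and $0 \le g_m \le 1$. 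Put
\[
f_n := \sum_{k\le n} 2^{-e(k)}\, g_{e(k)} .
\]
Since $e$ is injective, $\sum_k 2^{-e(k)} \le 2$, so each $f_n$ lies in $BV$ with ${\lVert f_n \rVert}_1 \le 2$ and bound on variation $\le 4$; the sequence $(f_n)_n$ together with its $BV$-codes is built uniformly and primitive-recursively from $e$, hence exists in \ls{RCA_0}. Applying \lp{HST} to $(f_n)_n$ returns an $f \in BV$, coded by some $(p_k)_k \subseteq \Rat[x]$ with ${\lVert p_k - p_{k+1} \rVert}_1 \le 2^{-k}$, and a subsequence with $f_{g(n)} \xrightarrow{n\to\infty} f$ in $L_1$.

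It remains to read $A$ off the code $(p_k)_k$. Because the $U_m$ are pairwise disjoint and $g_{m'}$ is supported in $U_{m'}$, on $U_m$ the function $f_{g(n)}$ coincides with $2^{-m} g_m$ when $m = e(k)$ for some $k \le g(n)$ and is identically $0$ otherwise; since $g(n) \to \infty$, the value $\int_{U_m} \abs{f_{g(n)}}$ is therefore eventually constantly $2^{-m}{\lVert g_m \rVert}_1$ when $m \in A$ and constantly $0$ when $m \notin A$. Passing to the $L_1$-limit, $\int_{U_m} \abs{f} \ge 2^{-m} r_m =: c_m > 0$ if $m \in A$, while $\int_{U_m} \abs{f} = 0$ if $m \notin A$. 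As ${\lVert f - p_k \rVert}_1 \le 2^{-k+1}$, choosing for each $m$ a recursive index $k(m)$ with $2^{-k(m)+1} < c_m/4$ gives $\int_{U_m} \abs{p_{k(m)}} > c_m/2$ when $m \in A$ and $\int_{U_m} \abs{p_{k(m)}} < c_m/2$ when $m \notin A$. Here $\int_{U_m} \abs{p_{k(m)}(x)}\,dx$ is the integral of the absolute value of a rational polynomial over a rational interval, hence a real computable from the data which — as just checked — is, for each $m$, strictly on one side of the rational $c_m/2$. Thus the predicate ``$\int_{U_m} \abs{p_{k(m)}} > c_m/2$'' is $\Delta^0_1$, and by $\Delta^0_1$-comprehension the set $\{\, m \mid \int_{U_m} \abs{p_{k(m)}} > c_m/2 \,\} = A$ exists. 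Since $e$ was arbitrary, this yields \ls{ACA_0}, completing the proof.

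I expect the main obstacle to be precisely the last step: one cannot in general decide whether a real number produced by \lp{HST} is zero or positive, so the construction must be rigged so that this decision never actually arises — the disjoint bumps $U_m$ and the fixed rational lower bound $c_m$ on the ``mass'' that each $m \in A$ deposits are exactly what reduce the recovery of $A$ to comparing an honestly computable real with a rational at which strict inequality is guaranteed in advance. A secondary point demanding care is the verification that the $BV$-codes of the $g_m$ and of the finite sums $f_n$, and the accompanying variation bounds, are all obtained uniformly inside \ls{RCA_0}, so that \lp{HST} is applied to a genuinely $\ls{RCA_0}$-definable sequence.
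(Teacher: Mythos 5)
Your reversal is correct, but it takes a genuinely different route from the paper. The paper's proof is a two-line reduction to the Bolzano--Weierstra{\ss} principle: given $(x_n)\subseteq[0,1]$ it applies \lp{HST} to the constant functions $f_n(x):=x_n$ (which lie in $BV$ with variation $0$), observes that ${\lVert f\rVert}_1=\lim{\lVert f_{g(n)}\rVert}_1$ is a limit point of $(x_n)$, and then quotes the equivalence of \lp{BW} with \ls{ACA_0} (Theorem~III.2.2 of \cite{sS09}); this choice of instance is also what makes the instance-wise equivalence of \lp{HST} and \lp{BW} in the subsequent corollary immediate. You instead prove \ls{ACA_0} via the range-existence characterization, coding an injection $e$ into a uniformly bounded-variation sequence of disjoint mollified bumps, applying \lp{HST} once, and recovering $\mathrm{range}(e)$ from the $L_1$-code of the limit by a $\Delta^0_1$-comprehension argument in which the fixed rational separation $c_m$ guarantees that the comparison of the computable real $\int_{U_m}\lvert p_{k(m)}\rvert$ with $c_m/2$ is provably two-sided; that is exactly the right device, and all the ingredients you use (uniform $BV$-codes for integrals of mollifiers, \prettyref{lem:l1bv}-style bookkeeping, $L_1$-convergence of the subsequence transferring to the integrals over the $U_m$) are available in \ls{RCA_0} as in the paper's own constructions (compare the proof of \prettyref{pro:bccduleq1eq}, which builds almost the same bump functions). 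What the paper's argument buys is brevity and the uniform instance-wise statement; what yours buys is a self-contained demonstration that a single, explicitly constructed instance of \lp{HST} computes the Turing jump, without routing through \lp{BW}. The only blemishes are cosmetic: the variation bound for the mollifier-difference $g_m$ is $2$ only up to the $2^{-k}$ approximation error (so one should take a slightly larger rational, as \prettyref{def:bv} permits), and correspondingly the bound $4$ for $f_n$ should be relaxed; this does not affect the argument.
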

\begin{proof}
  The right to left direction is simply \prettyref{thm:helly}. For the left-to-right direction we will show that \lp{HST} implies the Bolzano-Weierstra{\ss} principle (for $[0,1]$) which is by \cite[Theorem~III.2.2]{sS09} equivalent to \lp{ACA_0}. Let $(x_n)_n\subseteq [0,1]$ be any sequence in the unit interval. Let $f_n(x) := x_n$ be the sequence of corresponding constant functions. It is clear that $f_n\in BV$ and that ${\lVert f_n \rVert}_1 = x_n$. One easily verifies that for any limit $f$ as given by \lp{HST} the value ${\lVert f \rVert}_1$ is a limit point of $x_n$ and thus a solution to \lp{BW}. 
\end{proof}

The proofs of \prettyref{thm:helly} and \prettyref{thm:hstaca} actually give more information on the strength of \lp{HST}. It shows that for each instance of \lp{HST}, that is for each sequence of functions $(f_n)_n\subseteq BV$ with a uniform bound of variation, one can compute  uniformly a sequence $(x_n)_n\subseteq [0,1]$, such that from any limit point of this sequence one can compute a solution to \lp{HST} for $f_n$. By the proof of \prettyref{thm:hstaca} the backward direction also holds.
This  is summarized in the following corollary.
\begin{corollary}
  The principles \lp{HST} and \lp{BW} are instance-wise equivalent, i.e., writing \lpp{HST}{(f_n)} for \lp{HST} restricted to $(f_n)$ and \lpp{BW}{(x_n)}  for \lp{BW} restricted to $(x_n)$, then we have the following.
  There are codes for Turing machines $e_1,e_2$, such that
  \begin{align*}
    \ls{RCA_0} &\vdash \Forall{X} \left(\lpp{BW}{\{e_1\}^X} \IMPL \lpp{HST}{X}\right), \\
    \ls{RCA_0} &\vdash \Forall{X} \left(\lpp{HST}{\{e_2\}^X} \IMPL \lpp{BW}{X}\right). 
  \end{align*}
\end{corollary}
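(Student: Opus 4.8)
The plan is to read the two Turing machines off the proofs of \prettyref{thm:helly}, \prettyref{pro:aadiag} and \prettyref{thm:hstaca}, and then to verify that every step of the argument is carried out in \ls{RCA_0} except for a single application of the Bolzano-Weierstra{\ss} principle to an instance that is computed uniformly from $X$.

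The machine $e_2$ is the one implicit in \prettyref{thm:hstaca}. Given a \lp{BW}-instance $X=(x_n)_n\subseteq[0,1]$, let $\{e_2\}^X$ be the \lp{HST}-instance consisting of the constant functions $f_n(x):=x_n$ together with the bounds $u:=1$ and $v:=0$; these data are computable from $X$, and each $f_n$ lies in $BV$ with $\lVert f_n\rVert_1=x_n$ and variation $0$. If $(f,g)$ is any solution to $\lpp{HST}{\{e_2\}^X}$, i.e.\ $f\in BV$ with $f_{g(n)}\to f$ in $L_1$, then $x_{g(n)}=\lVert f_{g(n)}\rVert_1\to\lVert f\rVert_1$, so the real $\lVert f\rVert_1$, which is computed from the code of $f$, is a limit point of $(x_n)_n$ and hence a solution to $\lpp{BW}{X}$. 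No set existence beyond \ls{RCA_0} is used, so the second implication holds.

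For $e_1$ we unwind the proof of \prettyref{thm:helly}. Given an \lp{HST}-instance $X=(f_n)_n$ with bounds $u,v$, the mollifications $g_{n,i}:=f_n^{(2^{-i})}$ are computable from $X$, and by \eqref{eq:defmollification} and \eqref{eq:moldif2} the sequence $(g_{n,i})_n$ is, for each fixed $i$, bounded by $u\cdot 2^i$ and equicontinuous with a modulus of uniform equicontinuity $\phi_i$ computable from $\eta$, $u$ and $i$. Following the proof of \prettyref{pro:aadiag}, after rescaling into $[0,1]$ these data are packaged into the single sequence $\big(\big(g_{n,i}(q(\ell))\big)_{\langle\ell,i\rangle}\big)_n\subseteq[0,1]^\Nat$, where $q$ enumerates $\Rat\cap[0,1]$; applying the instance-wise collapse of \lp{BW} for $[0,1]^\Nat$ to \lp{BW} for $[0,1]$ from \cite{aK14a} turns this into a single sequence $(x_n)_n\subseteq[0,1]$, which we take to be $\{e_1\}^X$. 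Conversely, from a limit point of $(x_n)_n$ the same instance-wise equivalence returns a convergent subsequence of the $[0,1]^\Nat$-sequence, that is a function $g$ together with the values $\lim_n g_{g(n),i}(q(\ell))$ for every $\ell$ and $i$; by \cite[Lemma~3, Corollary~4]{aK14a} together with the moduli $\phi_i$ one computes from this the uniform limits of $g_{g(n),i}$ with explicit rates, i.e.\ \eqref{eq:conv}; and the estimate $\lVert f_{g(n)}-f_{g(n')}\rVert_1\le 2^{-k}+2^{-k+2}v$ of \prettyref{thm:helly} then yields an \ls{RCA_0}-verifiable $BV$-code (via \prettyref{lem:l1bv}) for $f:=\lim_n f_{g(n)}$, which is the required solution to $\lpp{HST}{X}$.

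The point that needs genuine care is the one already flagged in the introduction: a naive reading of the proof of \lp{HST} applies the Arzelà--Ascoli theorem once for each $\epsilon=2^{-i}$, and would therefore call \lp{BW} countably often, so it is a priori unclear that the whole process can be compressed into one \lp{BW}-for-$[0,1]$ instance. What makes it work is precisely the diagonalized form \prettyref{pro:aadiag}---which collects all the $\epsilon$-levels into one sequence in $[0,1]^\Nat$---combined with the instance-wise equivalence of \lp{BW} on $[0,1]^\Nat$ and on $[0,1]$ from \cite{aK14a}. Granting these, the remaining verification (forming mollifications, evaluating at rationals, rescaling, passing from coordinatewise to uniform convergence with computable rates, and assembling the $L_1$/$BV$-code) is routine and entirely within \ls{RCA_0}, so the two implications, and with them explicit codes $e_1,e_2$, follow.
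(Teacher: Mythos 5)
Your proposal is correct and follows essentially the same route as the paper, whose own justification is just the observation that the proofs of \prettyref{thm:helly} (via \prettyref{pro:aadiag} and the instance-wise reduction of \lp{BW} on $[0,1]^\Nat$ to \lp{BW} on $[0,1]$ from \cite{aK14a}) and of \prettyref{thm:hstaca} (constant functions $f_n(x):=x_n$, reading off $\lVert f\rVert_1$) are uniform and use only a single application of \lp{BW}, with all remaining steps effective in \ls{RCA_0}. Your unwinding of these proofs, including the packaging of all mollification levels into one sequence in $[0,1]^\Nat$ and the recovery of the $BV$-code via \prettyref{lem:l1bv}, matches the intended argument.
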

This corollary should be compared with Theorem~3.1 of \cite{aK11}, where it is shown that \lp{BW} is instance-wise equivalent to \lp{WKL} for $\Sigma^0_1$\nobreakdash-trees, and Theorem~9 of \cite{aK14a}, where it is shown that \lp{BW} is instance-wise equivalent to the Arzelà-Ascoli theorem.

\begin{remark}[\lp{HST_{weak}}]
  In \cite{aK11} we also analyzed the following weaker variant \lp{BW_{weak}} of the Bolzano-Weierstrass principle, which states that for each sequence $(x_n)\subseteq [0,1]$ there is a subsequence that converges but possibly without any computable rate of convergence. Since points are coded as sequences converging at the rate $2^{-k}$ the existence of the limit point of the sequence might not be provable. This principle is considerably weaker than \lp{BW}. For instance \lp{BW_{weak}} it does not imply \lp{ACA_0} nor \lp{WKL_0}.

  Replacing \lp{BW} in the above proof immediately yields that \lp{BW_{weak}} is instance-wise equivalent to the variant of \lp{HST} which only states the existence of a converging subsequence. 
\end{remark}

\subsection{\lp{HST} in the Weihrauch lattice}

Helly's selection theorem can be formulated in the Weihrauch lattice. The above proof yields also a classification in these terms.
We refer the reader to \cite{BG11,BGM12} for an introduction to the Weihrauch lattice.

The functions of the space $L_1$ can be represented by the rational polynomials closed under the $\lVert \cdot \rVert_1$\nobreakdash-norm. We will call this representation $\delta_{L_1}$. With this Helly's selection theorem is then a partial multifunction of the following type.
\[
\mathsf{HST} :\subseteq \left(L_1([0,1]),\delta_{L_1}\right)^\Nat \rightrightarrows \left(L_1([0,1]),\delta_{L_1}\right)
\]
where $\textrm{dom}(\mathsf{HST}) = \left\{\, (f_n) \sizeMid \int_0^1 \abs{f_n'}\, dx \text{ is uniformly bounded}\, \right\}$. The derivative of $f_n'$ here is taken in the sense of distributions. We chose this representation since it is customary in the Weihrauch lattice not to add any additional information---like the uniform bound on the variation---to the representation. However, we can easily recover this uniform bound by searching for it. This can be done using the limit $\mathrm{lim}$. Since $v$ is not needed to build the sequence of equicontinuous functions ($f_n^{(2^{-i})}$ in the proof of \prettyref{thm:helly}) the bound of the variation can be computed in parallel to the application of the diagonalized Arzelà-Ascoli theorem (which follows from $\mathsf{BWT_{[0,1]^\Nat}}$) . Thus,we get
\[
\mathsf{HST} \le_{\mathrm{{W}}} \mathrm{lim} \times \mathsf{BWT_{[0,1]^\Nat}} \le_{\mathrm{W}} \mathsf{BWT_{[0,1]^\Nat}} \equiv_{\mathrm{W}} \mathsf{BWT}_{\Real}
.\]
Using the reversal we obtain in total $\mathsf{HST} \equiv_{\mathrm{W}} \mathsf{BWT}_{\Real}$.

\section*{Acknowledgments}
I thank Jason Rute for useful discussions, and the two anonymous referees for remarks that lead to an improved presentation, to \prettyref{pro:just1con} (in particular its proof in \lp{WWKL_0}), and \prettyref{pro:bccduleq1eq}.

\bibliographystyle{acm}
\bibliography{bv.final2}

\end{document}